\documentclass[12pt, reqno]{amsart}
\usepackage[scaled]{helvet}
\usepackage{mathtools}
\mathtoolsset{showonlyrefs}
\usepackage{etoolbox}
\usepackage[utf8]{inputenc}
 \usepackage[T1]{fontenc}

\usepackage[pagewise]{lineno}
\usepackage{comment}
\usepackage{amsmath}
\usepackage{amssymb}
\usepackage{amsfonts}
\usepackage{mathrsfs}
\usepackage{amsbsy}
\usepackage{relsize}
\usepackage[colorlinks, linkcolor=fancy, citecolor=blue, urlcolor=Cblue, hypertexnames=false]{hyperref}
\usepackage{enumitem}
\usepackage{anyfontsize}
\usepackage{tikz}

\numberwithin{equation}{section}

\numberwithin{bbb}{section}

\usepackage[top=2.5cm, bottom=2.5cm, left=3cm, right=2.5cm]{geometry}

\definecolor{Cgrey}{rgb}{0.85,0.85,0.85}
\definecolor{Cblue}{rgb}{0.50,0.85,0.85}
\definecolor{Cred}{rgb}{1,.2,.4}
\definecolor{fancy}{rgb}{0.10,0.85,0.10}
\definecolor{mygreen}{rgb}{0.01,0.6,0.2}
\definecolor{tealgreen}{rgb}{0.0, 0.51, 0.5}
\definecolor{tangerine}{rgb}{0.95, 0.52, 0.0}
\definecolor{saffron}{rgb}{0.96, 0.77, 0.19}
\definecolor{mint}{rgb}{0.24, 0.71, 0.54}
\definecolor{lincolngreen}{rgb}{0.11, 0.35, 0.02}
\definecolor{lava}{rgb}{0.81, 0.06, 0.13}
\definecolor{lasallegreen}{rgb}{0.03, 0.47, 0.19}
\definecolor{mahogany}{rgb}{0.75, 0.25, 0.0}
\definecolor{electricultramarine}{rgb}{0.25, 0.0, 1.0}
\definecolor{mypink1}{rgb}{0.858, 0.188, 0.478}
\definecolor{mypink2}{RGB}{219, 48, 122}
\definecolor{mypink3}{cmyk}{0, 0.7808, 0.4429, 0.1412}
\definecolor{mygray}{gray}{0.6}
\definecolor{venetianred}{rgb}{0.78, 0.03, 0.08}
\definecolor{sapphire}{rgb}{0.03, 0.15, 0.4}
\definecolor{utahcrimson}{rgb}{0.83, 0.0, 0.25}
\definecolor{trueblue}{rgb}{0.0, 0.45, 0.81}
\definecolor{carminered}{rgb}{1.0, 0.0, 0.22}
\definecolor{cobalt}{rgb}{0.0, 0.28, 0.67}
\definecolor{cornflowerblue}{rgb}{0.39, 0.58, 0.93}
\definecolor{falured}{rgb}{0.5, 0.09, 0.09}

\usepackage{fancyhdr}
\usepackage{lipsum}

\newtheorem{theo}{Theorem}[section]
\newtheorem{prop}{Proposition}[section]
\newtheorem{lem}{Lemma}[section] 
\newtheorem{rem}{Remark}[section]
\newtheorem{defn}{Definition}[section]

\renewcommand{\geq}{\geqslant}
\renewcommand{\ge}{\geqslant}
\renewcommand{\leq}{\leqslant}
\renewcommand{\le}{\leqslant}

\usepackage{color}

\newcommand{\R}{\ensuremath{\mathbb{R}}}
\newcommand{\bq}{\mathbf{q}}
\newcommand{\bw}{\mathbf{w}}
\newcommand{\sv}{\mathsf{v}}
\newcommand{\bs}{\mathbf{S}}
\newcommand{\bb}{\mathbf{B}}
\newcommand{\bx}{\mathbf{X}}

\newlength{\absstep}
\usepackage{lipsum} 
\usepackage{tikz,xcolor}

\definecolor{lime}{HTML}{A6CE39}
\DeclareRobustCommand{\orcidicon}{
	\begin{tikzpicture}
	\draw[lime, fill=lime] (0,0) 
	circle [radius=0.16] 
	node[white] {{\fontfamily{qag}\selectfont \tiny ID}};
	\draw[white, fill=white] (-0.0625,0.095) 
	circle [radius=0.007];
	\end{tikzpicture}
	\hspace{-2mm}
}
\foreach \x in {A, B, C}{\expandafter\xdef\csname orcid\x\endcsname{\noexpand\href{https://orcid.org/\csname orcidauthor\x\endcsname}
			{\noexpand\orcidicon}}
}

\usepackage{bm}

\title[Fujita exponent]{Forcing Effects on Finite-Time Blow-Up in Degenerate and Singular Parabolic Equations}
\author[M. Majdoub \& B. T. Torebek]{Mohamed Majdoub\orcidA{} \& Berikbol T. Torebek\orcidB{}}
\address[M. Majdoub]{Department of Mathematics, College of Science, Imam Abdulrahman Bin Faisal University, P. O. Box 1982, Dammam, Saudi Arabia\newline   Basic and Applied Scientific Research Center, Imam Abdulrahman Bin Faisal University, P.O. Box 1982, 31441, Dammam, Saudi Arabia}
\email{\tt mmajdoub@iau.edu.sa}
\email{\tt med.majdoub@gmail.com}
\email{\tt  mohamed.majdoub@fst.rnu.tn}
\address[B. T. Torebek]{
Institute of Mathematics and Mathematical Modeling 28 Shevchenko Str., 050010 Almaty, Kazakhstan}
\email{\tt torebek@math.kz}

\subjclass[2020]{Primary: 35K65, 35B44;
Secondary: 35K15, 35B30, 35K05, 47D06, 35C15.}
\keywords{Degenerate parabolic equations; Fujita exponent; forcing term; finite-time blow-up; global existence; Hardy–Hénon equation.}
\begin{document}

\begin{abstract}
We study the degenerate and singular parabolic equation with a forcing term
\[
|x|^{\sigma_1}u_t = \Delta u + |x|^{\sigma_2}|u|^p + t^\varrho \mathbf{w}(x), \quad (t,x)\in(0,\infty)\times\mathbb{R}^N,
\]
where $N\ge 2$, $\sigma_1,\sigma_2>-2$, $\varrho>-1$, $p>1$, and $\mathbf{w}\in L^1(\mathbb{R}^N)$ is continuous. 
We establish critical exponents that sharply separate the regimes of global existence and finite-time blow-up. 
For $\varrho>0$, we prove that there is no weak global solution for all  $p>1$. 
When $-1<\varrho<0$, we show that if 
\[
p < p^*:=\frac{N+\sigma_2-\varrho(2+\sigma_1)}{N-2-\varrho(2+\sigma_1)},
\]
then every weak solution blows up in finite time, provided $\int\limits_{\mathbb{R}^N}\mathbf{w}(x)\,dx>0$. 
In the case $\varrho=0$, blow-up occurs for $p\le (N+\sigma_2)/(N-2)_+$ with $N\ge 2$. 
In contrast, for $p>p^*$ and under smallness conditions on the initial data and forcing term, we prove the existence of a unique global mild solution. 
The analysis relies on scaling transformations, semigroup estimates for degenerate operators, and a fixed-point argument in weighted-in-time Lebesgue spaces.\end{abstract}

\maketitle

\section{Introduction and main results}
In this paper, we investigate the critical behavior of the degenerate parabolic equation with forcing
\begin{equation}\label{main}
|x|^{\sigma_1} u_t=\Delta u+|x|^{\sigma_2}|u|^p+t^{\varrho}\bw(x),
\qquad (t,x)\in (0,\infty)\times\mathbb{R}^{N},
\end{equation}
where $N\ge1$, $\sigma_1,\sigma_2>-2$, $p>1$, $\varrho>-1$, and $\bw:\mathbb{R}^N\to\mathbb{R}$ is a continuous function given.  
We also consider the unforced version,
\begin{equation}\label{main-unforced}
|x|^{\sigma_1} u_t=\Delta u+|x|^{\sigma_2}|u|^p,
\qquad (t,x)\in (0,\infty)\times\mathbb{R}^{N}.
\end{equation}

Equation~\eqref{main-unforced} fits into the more general class of nonhomogeneous parabolic equations
\begin{equation}\label{main1}
\rho(x)u_t=\Delta u+\bq(x)|u|^{p},
\qquad (t,x)\in (0,\infty)\times\mathbb{R}^{N},
\end{equation}
where the positive function $\rho$ represents the density of the medium and $\bq$ is a prescribed continuous function.  
As explained in \cite{Pablo}, the model \eqref{main1} describes heat transfer in a nonhomogeneous medium of density $\rho$, subject to a nonlinear temperature–dependent source. Standard
 assumptions include positivity, boundedness, and smoothness of $\rho$, together with asymptotic behavior
\begin{equation}\label{rho-infinity}
\rho(x)\sim |x|^{-\sigma}\qquad \text{as } |x|\to\infty.
\end{equation}

The linear counterpart of~\eqref{main1},
\[
\rho(x)u_t=\Delta u,\qquad u(0,x)=u_0(x),
\]
was studied in \cite{Kamin2007}; see also \cite{Kamin1998, Punzo2009}.  
For the nonlinear problem with $\bq\equiv 1$, it is shown in \cite{Pablo} that when $p>1$ and $0<\sigma<2$, the Fujita exponent is
\[
p_F(N,\sigma)=1+\frac{2}{N-\sigma}.
\]
Under a stronger condition on $\rho$, blow-up occurs whenever the initial data satisfy (see \cite[Theorem~2.2]{Pablo})
\[
\lim_{|x|\to\infty} |x|^{\frac{\sigma}{p-1}}u_0(x)=\infty.
\]  
Further results for the cases $p=1$ and $0<p<1$ are also obtained in \cite{Pablo}.  
In \cite{Li-Xiang}, the equation \eqref{main1} is treated under the assumption $\bq(x)\sim |x|^{-m}$ as $|x|\to\infty$, $m\in\mathbb{R}$, leading to the Fujita exponent
\[
p_F(N,\sigma,m)=1+\frac{2-m}{N-\sigma}.
\]

In the radial case, \cite[Theorem~3.1]{Igar23} shows that the first and second critical Fujita exponents for \eqref{main-unforced} are
\begin{align}
\label{Fuj-exp1}
p_F &= 1+\frac{2+\sigma_2}{N+\sigma_1},\\
\label{Fuj-exp2}
\mu^* &= \frac{2(2+\sigma_2)}{(2+\sigma_1)(p-1)}.
\end{align}

The {\em  second critical} exponent $\mu^*>0$ is the threshold governing the influence of spatial decay of initial data:  
solutions blow up in finite time when $u_0(x)\sim C|x|^{-\mu}$ with $0<\mu<\mu^*$, while global solutions exist for $\mu>\mu^*$.

Although it is sometimes asserted that the first Fujita exponent for \eqref{main-unforced} follows immediately from scaling, this is misleading.  
Indeed, the equation is invariant under
\[
u_\lambda(t,x)
 = \lambda^{\frac{2+\sigma_2}{p-1}} 
   u\!\left(\lambda^{\sigma_1+2} t,\lambda x\right),
   \qquad \lambda>0,
\]
and the only Lebesgue space preserved by this scaling is $L^{p_c}(\mathbb{R}^N)$ with
\[
p_c=\frac{N(p-1)}{2+\sigma_2}.
\]
The condition $p_c=1$ corresponds to $p=1+\frac{2+\sigma_2}{N}$, which does \emph{not} match the true Fujita exponent~\eqref{Fuj-exp1}.  
Thus scaling alone does not reveal the correct threshold.

Following \cite{Igar23}, a key tool in the radial setting is the transformation
\begin{equation*}\label{Transf}
u(t,r)=\sv(\tau,z),
\qquad r=|x|,\quad z=r^\theta,\quad \tau=\Lambda t,
\end{equation*}
where $\Lambda>0$ and $\theta\in\mathbb{R}$ are chosen appropriately.  
Assuming $\bw(x)=\bw(r)$, a direct computation gives
\begin{equation}\label{main-rad}
\begin{split}
\Lambda\, \sv_\tau
&=\theta^2 z^{2-\frac{2+\sigma_1}{\theta}}\sv_{zz}
 +\theta(\theta+N-2)z^{1-\frac{2+\sigma_1}{\theta}}\sv_{z}
 +z^{\frac{\sigma_2-\sigma_1}{\theta}}|\sv|^p  \\
&\quad +\Lambda^{-\varrho}\tau^{\varrho} z^{-\frac{\sigma_1}{\theta}}
   \bw\!\left(z^{1/\theta}\right).
\end{split}
\end{equation}
Introduce the parameters
\[
\theta = 1 + \frac{\sigma_1}{2}, \qquad
\sigma = \frac{2(\sigma_2 - \sigma_1)}{2 + \sigma_1}, \qquad
\overline{N} = \frac{2(N + \sigma_1)}{2 + \sigma_1}, \qquad
\Lambda = \theta^{\frac{2\sigma}{2+\sigma}},
\]
and apply the change of variables
\begin{equation*}\label{Variab-s}
s=\theta^{-\frac{2}{2+\sigma}}z.
\end{equation*}
Then \eqref{main-rad} becomes
\begin{equation}\label{main-rad-final}
\sv_\tau = \sv_{ss} + \frac{\overline{N}-1}{s}\sv_{s} + s^\sigma |\sv|^p + \tau^{\varrho}\mathbf{W}(s),
\end{equation}
with
\begin{equation*}\label{W-s}
\mathbf{W}(s)
 = \Lambda^{-\varrho-1} s^{-\frac{2\sigma_1}{2+\sigma_1}}
   \bw\!\left(\theta^{\frac{2}{\theta(2+\sigma)}} s^{1/\theta}\right).
\end{equation*}
Equation~\eqref{main-rad-final} corresponds to the transformed version of
\begin{equation}\label{Transf-eq}
v_t = \Delta v + |x|^\sigma |v|^p + t^\varrho \bw(x)
\end{equation}
in radial coordinates for an effective spatial dimension $\overline{N}$.  
According to \cite{Ma}, the first Fujita exponent for \eqref{main} is
\begin{equation}\label{Fuj-main}
p^*=\frac{\overline{N}-2\varrho+\sigma}{\overline{N}-2\varrho-2}
\end{equation}
whenever $-1<\varrho\le0$ and $\displaystyle\int\limits_{\mathbb{R}^N}\bw(x)\,dx>0$.

The transformation leading to \eqref{Transf-eq} is introduced to shed light on the origin of the critical exponent and to place our problem within the framework of weighted Hardy--Hénon type equations, for which the critical behavior is well understood. This reformulation provides useful structural insight and helps motivate the identification of the threshold exponent.

However, we stress that equation~\eqref{Transf-eq} is not used in subsequent proofs. The preceding discussion and the transformation itself are intended solely as a heuristic argument to determine the critical exponent $p^*$.

In order to formulate the problem in a weak framework compatible with the weighted structure of \eqref{main}, we introduce the following notion of a weak global solution.
\begin{defn}\label{defn:weak-solution}
We say that $u$ is a weak global solution of \eqref{main}, with initial data $u(0,x)=u_0(x)$, if it satisfies
\begin{equation*}
\begin{split}
&u_0\in L^1_{\mathrm{loc}}(\mathbb{R}^N), \;
u\in L^1_{\mathrm{loc}}((0,\infty)\times\mathbb{R}^N),
\\
&|x|^{\sigma_2}|u|^p\in L^1_{\mathrm{loc}}((0,\infty)\times\mathbb{R}^N),
\quad
|x|^{\sigma_1}u \in L^1_{\mathrm{loc}}((0,\infty)\times\mathbb{R}^N),
\end{split}
\end{equation*}
and, for all $\psi\in C^{\infty}_{0}([0,\infty)\times\mathbb{R}^N)$,
\begin{equation*}
\begin{aligned}
\int\limits_{(0,\infty)\times\mathbb{R}^N}
\left(-|x|^{\sigma_1}\partial_t\psi+\Delta\psi\right)u\,dt\,dx
&=
\int\limits_{\mathbb{R}^N} |x|^{\sigma_1}u_0(x)\psi(0,x)\,dx  \\
&\quad
+\int\limits_{(0,\infty)\times\mathbb{R}^N}
\left(|x|^{\sigma_2}|u|^p + t^{\varrho}\mathbf w(x)\right)\psi\,dt\,dx 
\end{aligned}
\end{equation*}
\end{defn}

Our first main result establishes \eqref{Fuj-main} without requiring radial symmetry. More precisely, we have the following statement.
\begin{theo}\label{th1}
Let $N\geq 2$, $\sigma_1,\sigma_2>-2$, $p>1$, $u_{0}\in L^{1}_{\mathrm{loc}}(\mathbb{R}^{N})$, and $\mathbf{w}\in L^{1}(\mathbb{R}^{N})$. Moreover, assume that
\begin{equation*}\label{assum-w}
    \int\limits_{\mathbb{R}^{N}} \mathbf{w}(x)\,dx > 0.
\end{equation*}
Then the following assertions hold:
\begin{enumerate}[label=(\roman*)]
    \item If $\varrho>0$, then the problem~\eqref{main} admits no global-in-time weak solution for any $p>1$.

    \item If $-1<\varrho<0$ and
    \begin{equation}\label{crit}
        p < p^{*} 
        := \frac{N+\sigma_{2}-\varrho(2+\sigma_{1})}
        {\,N-2-\varrho(2+\sigma_{1})\,},
    \end{equation}
    then the problem~\eqref{main} admits no global-in-time weak solution.

    \item If $\varrho=0$, $N\geq 2$,  and
    \begin{equation*}\label{crit1}
        p \leq \frac{N+\sigma_{2}}{\,(N-2)_+\,},
    \end{equation*}
    then the problem~\eqref{main} admits no global-in-time weak solution.
\end{enumerate}
\end{theo}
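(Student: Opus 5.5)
We argue by contradiction with the test-function (nonlinear capacity) method. Suppose $u$ is a global weak solution. We use test functions of separated form $\psi(t,x)=\eta(t/T)\,\phi(x/R)$, where $\eta\in C^\infty([0,\infty))$ satisfies $\eta\equiv1$ on $[0,1/2]$, $\eta\equiv0$ on $[1,\infty)$, $0\le\eta\le1$, and $\phi\in C_0^\infty(\mathbb R^N)$ satisfies $\phi\equiv1$ on $B_1$ and $\operatorname{supp}\phi\subset B_2$. Both are raised to a large power $\ell$ so that $|\partial_t\psi|\lesssim T^{-1}\psi^{1/p}$ and $|\Delta\psi|\lesssim R^{-2}\psi^{1/p}$.

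To remove the initial-data term, whose sign is unknown, we take $\eta$ with $\eta(0)=0$ instead. That is, we use a time cut-off supported in $[T/2,T]$ or $[\varepsilon T,T]$; the weak formulation with $\psi(0,\cdot)=0$ is allowed since $\psi\in C_0^\infty$. This gives
\begin{equation*}
\int\!\!\int |x|^{\sigma_2}|u|^p\psi+\int\!\!\int t^\varrho \mathbf w\psi=\int\!\!\int u\left(-|x|^{\sigma_1}\partial_t\psi-\Delta\psi\right).
\end{equation*}
Write $I=\int\!\!\int|x|^{\sigma_2}|u|^p\psi$. By Young's inequality with weight $|x|^{\sigma_2}$, the right-hand side is at most $\frac12 I+C\,\mathcal A(T,R)$, where
\begin{equation*}
\mathcal A(T,R)=\int\!\!\int |x|^{-\frac{\sigma_2}{p-1}}\psi^{-\frac{1}{p-1}}\left(|x|^{\sigma_1 p'}|\partial_t\psi|^{p'}+|\Delta\psi|^{p'}\right).
\end{equation*}
After scaling $t=Ts$ and $x=Ry$, this becomes
\begin{equation*}
\mathcal A\lesssim T\,R^{N-\frac{\sigma_2}{p-1}}\left(T^{-p'}R^{\sigma_1p'}+R^{-2p'}\right).
\end{equation*}
The weights $|x|^{-\sigma_2/(p-1)}$ and $|x|^{\sigma_1p'}$ near the origin are integrable only under restrictions. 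Since $\Delta\phi$ vanishes near $0$, only the $\partial_t$ term sees the origin. Its singularity $|x|^{\sigma_1p'-\sigma_2/(p-1)}$ is integrable iff $N+\sigma_1p'-\sigma_2/(p-1)>0$. To avoid this, I would instead choose $\phi$ vanishing near the origin, or take $\phi$ supported in an annulus $R\le|x|\le 2R$-adapted region plus a ball. If this causes trouble, I would use a cut-off in $\psi$ that is constant in time on the ball and handle the $\partial_t$ term through $\int|x|^{\sigma_1}u$ estimates. This technical point is the first obstacle.

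On the forcing side, $\int\!\!\int t^\varrho\mathbf w\psi\approx c\,T^{1+\varrho}\int\mathbf w\,\phi(x/R)\,dx$. For $R$ large, $\int\mathbf w\,\phi(\cdot/R)\,dx\to\int\mathbf w>0$ by dominated convergence. Thus
\begin{equation*}
c\,T^{1+\varrho}\le C\,T R^{N-\frac{\sigma_2}{p-1}}\left(T^{-p'}R^{\sigma_1p'}+R^{-2p'}\right).
\end{equation*}
We balance the two terms by choosing $T=R^{2+\sigma_1}$, the natural parabolic scaling of the operator. The right side is then $R^{\beta}$ with $\beta=(2+\sigma_1)+N-\frac{\sigma_2}{p-1}-2p'$, and the left is $R^{(2+\sigma_1)(1+\varrho)}$. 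A contradiction as $R\to\infty$ follows when
\begin{equation*}
N-2-\varrho(2+\sigma_1)<\frac{2+\sigma_2}{p-1},
\end{equation*}
which, when the left side is positive, is exactly $p<p^*$. This gives (ii).

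For (i), with $\varrho>0$, we decouple $T$ and $R$. Fix $R$ large, then let $T\to\infty$. The left side grows like $T^{1+\varrho}$, while the right side is $O(T^{1-p'})+O(T)$ for fixed $R$, so $T^{1+\varrho}\lesssim T$ already fails. More carefully, taking $T=R^{\gamma}$ with $\gamma$ large makes the exponents compare favorably for all $p>1$.

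For (iii), with $\varrho=0$, the subcritical case follows as above. It also covers $N=2$, where $p^*=\infty$, because the condition reads $0<\frac{2+\sigma_2}{p-1}$. The critical case $p=(N+\sigma_2)/(N-2)$ needs the standard refinement. The bound yields $I$ bounded uniformly in $R$, hence $\int\!\!\int|x|^{\sigma_2}|u|^p<\infty$. We then redo Hölder estimates only on the support of the derivatives, the annuli $R\le|x|\le2R$ and $T/2\le t\le T$, where the tails of $I$ tend to zero. That forces $\int\mathbf w\le0$, a contradiction. Alternatively, one can use a logarithmic cut-off. This critical case, together with the origin singularity issue, is the main obstacle.
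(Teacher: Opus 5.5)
For (i) and (ii) you follow the paper's argument. The ingredients are the same: a time cut-off vanishing near $t=0$ to discard $u_0$, Young's inequality against $|x|^{\sigma_2}|u|^p$, the scaling $T=R^{2+\sigma_1}$, and the resulting condition $N-2-\varrho(2+\sigma_1)<\frac{2+\sigma_2}{p-1}$. For (i), fixing $R$ and sending $T\to\infty$ is a valid replacement for the paper's choice $T=R^{N/\varrho}$, and a more robust one.

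\textbf{The gap: the critical case of (iii).} The problem is the case $\varrho=0$, $p=\frac{N+\sigma_2}{N-2}$. Your claim that the basic estimate gives $I$ bounded uniformly in $R$ is false in the presence of forcing. With $T=R^{2+\sigma_1}$ and $\frac{2+\sigma_2}{p-1}=N-2$, your own bound gives $\mathcal A\lesssim T R^{N-2-\frac{2+\sigma_2}{p-1}}=T$. The forcing term on the left, $cT\int\mathbf w\,\phi(x/R)\,dx$, is of the same order. So all you obtain is $\tfrac12 I\le CT$.
\begin{itemize}
\item Nothing shows $\int_0^\infty\!\int|x|^{\sigma_2}|u|^p<\infty$; for a forced problem one expects linear growth in time. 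So there are no tails tending to zero.
\item Your time cut-off must vanish at $t=0$, so the set where $\partial_t\psi\neq0$ is a fixed fraction of $[T/4,T]\times B_{2R}$. Restricting H\"older to the support of the derivatives therefore gains nothing for the $\partial_t$-term.
\end{itemize}

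\textbf{What the paper does instead.} The missing idea is to decouple the scales and extract a logarithmic gain from the Laplacian term.
\begin{itemize}
\item Take $T=R^m$ with $m>2+\sigma_1$. Then the $\partial_t$-contribution is $T\,(R^{2+\sigma_1}/T)^{p'}=o(T)$.
\item Use the cut-off $\varphi\bigl(\log(|x|/\sqrt R)/\log\sqrt R\bigr)$, for which $|\Delta\phi|\lesssim |x|^{-2}(\log R)^{-1}\phi^{2/(p-1)}$ on $\sqrt R<|x|<R$.
\item At the critical exponent, $|x|^{-\frac{\sigma_2}{p-1}-2p'}=|x|^{-N}$. Hence the Laplacian term is $\lesssim T(\log R)^{1-p'}=T(\log R)^{-\frac{N-2}{2+\sigma_2}}$.
\item Dividing by $T$ and letting $R\to\infty$ yields $\int\mathbf w\le 0$.
\end{itemize}
Your tail idea can also be repaired equivalently. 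Start from $\int_{T/4}^{T}\!\int_{B_R}|x|^{\sigma_2}|u|^p\le CT$ with $T\gg R^{2+\sigma_1}$. Among the $\sim\log R$ dyadic annuli between $\sqrt R$ and $R$, pick by pigeonhole one carrying at most $CT/\log R$ of this mass, and apply H\"older only there. You mention the logarithmic cut-off only in passing, but it (or such a pigeonhole) is the step that carries the critical case.

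\textbf{The singularity at the origin.} The paper does not address this point either. Its bound for the $\partial_t$-term tacitly needs $N+\frac{\sigma_1p-\sigma_2}{p-1}>0$, i.e.\ $p>\frac{N+\sigma_2}{N+\sigma_1}$; this is automatic when $\sigma_2\le\sigma_1$. Be aware that your proposed remedy, cutting out a fixed neighbourhood of the origin, only works for (i). After Young's inequality, the inner cut-off contributes a Laplacian term of order $T$. The forcing term $\sim T^{1+\varrho}$ cannot beat this when $\varrho\le 0$.
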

\begin{rem}\label{rem:1.1} 
\rm 
    ~\begin{enumerate}[label=(\roman*)]
    \item For related results in the case $\sigma_1=\sigma_2=0$, we refer the reader to \cite{Opuscula, BLZ, JKS, Ma}. In particular,
\[
p^{*}=\frac{N-2\varrho}{N-2\varrho-2},
\]
which is consistent with \cite[Theorem 1.1, (1.8)]{JKS}.

\item  When $\varrho = 0$ and $N=2$, the critical exponent $p^*$ defined in \eqref{crit} becomes
$p^* = \infty.$ Consequently, in this case, there is no global-in-time weak solution for any $p>1$.

\item When $\varrho = 0$ and $\sigma_2 = 0$, the critical exponent $p^*$ defined in \eqref{crit} reduces to
\[
p^* = \frac{N}{N-2}, \quad N \ge 3,
\]
regardless of the value of $\sigma_1 > -2$. This coincides with the well-known Fujita exponent for the semilinear heat equation with a forcing term, which in the non-degenerate case $\sigma_1 = \sigma_2 = 0$ has been established in \cite{BLZ}. Consequently, for time-independent forcing ($\varrho = 0$) and an unweighted nonlinearity ($\sigma_2 = 0$), the degenerate or singular coefficient $|x|^{\sigma_1}$ does not influence the critical threshold for blow-up.

\item  In the critical case $\varrho=0$ and $p=\frac{N+\sigma_2}{N-2}$, the nonexistence result of Theorem~\ref{th1}(iii) extends the classical blow-up phenomenon for the Hardy–Sobolev type equations. The logarithmic cut-off function used in the proof is essential to handle the borderline decay.
\end{enumerate}
\end{rem}

We now turn to the global theory for exponents $p$ exceeding the critical value $p^*$ given in~\eqref{crit}.  
To this end, we study global-in-time mild solutions to the Cauchy problem
\[
u_t = |x|^{-\sigma_1}\Delta u
      + |x|^{\sigma_2 - \sigma_1}\,|u|^{p}
      + t^{\varrho}\,|x|^{-\sigma_1}\,\bw(x), 
      \qquad x \in \mathbb{R}^N,
\]
under suitable conditions on the parameters $\sigma_1$, $\sigma_2$, $\varrho$, and $p$, together with a smallness assumption on $u_0$ and $\bw$.  
Our analysis relies on the semigroup estimates for the operator $|x|^{-\sigma_1}\Delta$ established in Proposition~\ref{prop:weighted-estimate}, combined with a fixed-point argument in a weighted-in-time Lebesgue space.  
The precise statement is the following.
\begin{theo}\label{thm:global-main}
Let \(N\ge 2\) and assume
\[
-2\;<\;\sigma_2\;<\;\sigma_1\;\le\; 0,\qquad -1\;<\;\varrho\;<\;0,\qquad p\;>\;1.
\]
Define the critical exponents \(p_c, r_c, p^*\) by
\begin{equation}\label{pcritt}
\begin{split}
p_c \;&=\; \frac{N(p-1)}{2+\sigma_2},\\[2mm]
p^* \;&=\; \frac{N+\sigma_2 - \varrho(2+\sigma_1)}{\,N - 2 - \varrho(2+\sigma_1)\,},\\[2mm]
r_c \;&=\; \frac{N(p-1)}{2+\sigma_2 + (1+\varrho)(2+\sigma_1)(p-1)}.
\end{split}
\end{equation}
Assume, in addition, that
\begin{equation*}\label{p-p-star}
p > p^*.
\end{equation*}
Then there exists $\varepsilon>0$ such that, for every initial data 
$u_0$ and forcing term $\bw$ satisfying
\begin{equation}\label{small-assum}
\|u_0\|_{L^{p_c}} + \||\cdot|^{-\sigma_1}\bw\|_{L^{r_c}} < \varepsilon,
\end{equation}
the mild formulation
\begin{equation}\label{mild}
\nu(t)
= \bs(t)u_0
  + \int\limits_0^t \bs(t-s)\bigl(|x|^{\sigma_2-\sigma_1}\,|u(s)|^p\bigr)\,ds
  + \int\limits_0^t \bs(t-s)\bigl(s^{\varrho}\,|x|^{-\sigma_1} w\bigr)\,ds
\end{equation}
admits a unique global-in-time mild solution  
 $u\in L^\infty\!\bigl((0,\infty);L^{r}(\mathbb R^N)\bigr)$ satisfying\, $\displaystyle\sup_{t>0} t^\mu\|u(t)\|_{L^{r}}<\infty$, where $r$ and $\mu$ are defined in \eqref{choice-r} and \eqref{def:mu} below.
\end{theo}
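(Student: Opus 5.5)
We will prove the theorem with a Banach fixed-point argument in the weighted-in-time space
\[
X=\Bigl\{u\in L^\infty_{\mathrm{loc}}((0,\infty);L^r(\mathbb R^N)) : \|u\|_X:=\sup_{t>0}t^{\mu}\|u(t)\|_{L^r}<\infty\Bigr\},
\]
where $r>p_c$ is fixed below and $\mu$ is the scaling-dictated exponent. In the heuristic picture the semigroup $\bs(t)$ generated by $|x|^{-\sigma_1}\Delta$ has parabolic scaling $x\sim t^{1/(2+\sigma_1)}$. The heat-type bound $\|\bs(t)f\|_{L^r}\lesssim t^{-\frac{N}{2+\sigma_1}(\frac1q-\frac1r)}\|f\|_{L^q}$ then suggests
\[
\mu=\frac{N}{2+\sigma_1}\Bigl(\frac1{p_c}-\frac1r\Bigr).
\]
We rely on the estimates of Proposition~\ref{prop:weighted-estimate}. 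We expect these to be $L^q$--$L^r$ smoothing estimates for $\bs(t)$, possibly with a power weight $|x|^{-\gamma}$ on the input, of the form
\[
\|\bs(t)(|x|^{-\gamma}f)\|_{L^r}\lesssim t^{-\frac{N}{2+\sigma_1}(\frac1q-\frac1r)-\frac{\gamma}{2+\sigma_1}}\|f\|_{L^q},
\]
valid in a range of $q$, $r$ and $\gamma$. This is where the sign conditions $\sigma_2<\sigma_1\le 0$ enter, since they make $\gamma=\sigma_1-\sigma_2>0$ a Hardy-type singularity.

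\textbf{Step 1: the free and forcing terms.} The linear part $\bs(t)u_0$ is controlled directly: $t^\mu\|\bs(t)u_0\|_{L^r}\lesssim\|u_0\|_{L^{p_c}}$. For the forcing term we apply the $L^{r_c}\to L^r$ estimate to $|x|^{-\sigma_1}\bw$:
\[
\Bigl\|\int_0^t\bs(t-s)s^\varrho|x|^{-\sigma_1}\bw\,ds\Bigr\|_{L^r}\lesssim \||x|^{-\sigma_1}\bw\|_{L^{r_c}}\int_0^t (t-s)^{-\frac{N}{2+\sigma_1}(\frac1{r_c}-\frac1r)}s^\varrho\,ds .
\]
The definition of $r_c$ is designed so that this integral equals a Beta function times $t^{-\mu}$. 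Indeed $\frac{N}{2+\sigma_1}\frac1{r_c}=\frac{N}{2+\sigma_1}\frac1{p_c}+(1+\varrho)$, so the exponents match. The integral converges provided $\varrho>-1$ and $\frac{N}{2+\sigma_1}(\frac1{r_c}-\frac1r)<1$. This last condition is a constraint on $r$, and the hypothesis $p>p^*$ is exactly what guarantees $r_c>1$, i.e.\ that $r_c$ is an admissible Lebesgue exponent with room to choose $r$.

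\textbf{Step 2: the nonlinear term and contraction.} For the nonlinear term we write $|x|^{\sigma_2-\sigma_1}|u|^p=|x|^{-\gamma}|u|^p$ and use the weighted estimate with $q=r/p$:
\[
\Bigl\|\int_0^t\bs(t-s)|x|^{-\gamma}|u(s)|^p\,ds\Bigr\|_{L^r}\lesssim\|u\|_X^p\int_0^t(t-s)^{-\frac{N(p-1)}{(2+\sigma_1)r}-\frac{\gamma}{2+\sigma_1}}s^{-p\mu}\,ds .
\]
Scale invariance, via the definition of $p_c$, makes the right-hand side equal to $C\,t^{-\mu}\|u\|_X^p$. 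Convergence requires $p\mu<1$ and $\frac{N(p-1)}{r}+\gamma<2+\sigma_1$, together with $r/p\ge 1$ and the admissibility range from Proposition~\ref{prop:weighted-estimate}. The difference estimate follows in the same way from $||u|^p-|v|^p|\le p(|u|^{p-1}+|v|^{p-1})|u-v|$ and Hölder's inequality. The map $\Phi$ defined by the right-hand side of \eqref{mild} then satisfies
\[
\|\Phi u\|_X\le C\varepsilon+C\|u\|_X^p,\qquad \|\Phi u-\Phi v\|_X\le C\bigl(\|u\|_X^{p-1}+\|v\|_X^{p-1}\bigr)\|u-v\|_X .
\]
For small $\varepsilon$, $\Phi$ is a contraction on a small ball of $X$, which gives existence and uniqueness in that ball.

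\textbf{Main obstacle.} The hardest step is choosing $r$, i.e.\ \eqref{choice-r}, so that all the inequalities hold simultaneously:
\begin{itemize}
\item $r>p$ and $r>p_c$;
\item $p\mu<1$;
\item $\frac{N(p-1)}{r}+\sigma_1-\sigma_2<2+\sigma_1$;
\item $\frac{N}{2+\sigma_1}(\frac1{r_c}-\frac1r)<1$, i.e.\ $\mu<\varrho+2$;
\item the parameter range of the weighted semigroup estimate.
\end{itemize}
We would first check that $p>p^*$ makes the interval of admissible $1/r$ nonempty, by writing every condition as a bound on $1/r$ and comparing endpoints; $-1<\varrho<0$ and $\sigma_2<\sigma_1\le0$ should be exactly what makes these bounds compatible. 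We would also verify that the $L^\infty((0,\infty);L^r)$ statement is understood as boundedness of $t^\mu\|u(t)\|_{L^r}$, as in the theorem.
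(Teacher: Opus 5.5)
Your overall scheme is the paper's: the same time-weighted space and the same $\mu$. You use Proposition~\ref{prop:weighted-estimate} three times, as the paper does: input $p_c$ for $\bs(t)u_0$, input $r/p$ with $\gamma=\sigma_1-\sigma_2$ for the nonlinearity, and input $r_c$ for the forcing. You then run a contraction in a small ball. Your choice of $L^\infty_{\mathrm{loc}}$ with the weighted norm is the right setting, since $\mu>0$ means one cannot expect $\sup_{t>0}\|u(t)\|_{L^r}<\infty$.

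The gap is that the step you call the main obstacle, the existence of an admissible $r$, is never carried out. This is exactly where $p>p^*$ is used, and the paper devotes Lemma~\ref{lem:choice-param} and Lemma~\ref{lem:f-p} to it. One of your listed constraints is also wrong. From your own identity $\frac{N}{2+\sigma_1}\frac1{r_c}=\frac{N}{2+\sigma_1}\frac1{p_c}+(1+\varrho)$, the forcing kernel exponent is $\beta=\mu+1+\varrho$. So $\beta<1$ means $\mu<-\varrho$, not $\mu<\varrho+2$. Since $\mu<1/p<1$ anyway, your version is vacuous. Following your checklist, you would miss the one genuinely restrictive compatibility condition and could pick an $r$ for which $\int_0^1(1-\tau)^{-\beta}\tau^{\varrho}\,d\tau$ diverges. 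Relatedly, $p>p^*$ is not equivalent to $r_c>1$; it only implies it. It is equivalent to $\frac1{r_c}<1+\frac{\sigma_1}{N}$, the admissibility condition in \eqref{cond} for input exponent $r_c$.

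To close the gap, put $A=2+\sigma_1$ and $x=1/r$.
\begin{itemize}
\item \textbf{Upper bounds.} First, $x<\frac1{p_c}$, which gives $\mu>0$ and, equivalently, nonlinear kernel exponent $\delta<1$. Second, $x<\frac{N+\sigma_2}{Np}$, the right inequality in \eqref{cond} for $q_1=r/p$, $\gamma=\sigma_1-\sigma_2$; it also gives $r>p$ because $\sigma_2<0$.
\item \textbf{Lower bounds.} First, $x>\frac1{p_c}-\frac{A}{Np}$, i.e.\ $p\mu<1$. Second, $x>\frac1{p_c}+\frac{\varrho A}{N}$, i.e.\ $\beta<1$.
\end{itemize}
Both lower bounds lie below $\frac1{p_c}$ trivially.

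The first lower bound lies below $\frac{N+\sigma_2}{Np}$ iff $p>1+\frac{2+\sigma_2}{N+\sigma_1}$. This follows from $p>p^*=1+\frac{2+\sigma_2}{N-2-\varrho A}$, because $\varrho>-1$ gives $N-2-\varrho A<N+\sigma_1$.

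The second lower bound lies below $\frac{N+\sigma_2}{Np}$ iff, after multiplying by $Np(p-1)$,
\[
\varrho A\,(p-1)^2-(N-2-\varrho A)(p-1)+(2+\sigma_2)<0 .
\]
This holds for $p>p^*$, because then $(N-2-\varrho A)(p-1)>2+\sigma_2$ while $\varrho A<0$.

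Finally, $\frac1{p_c}<\frac1{r_c}<1+\frac{\sigma_1}{N}$, again by $p>p^*$, makes the linear and forcing estimates admissible. The identity $\delta=1-(p-1)\mu$ gives $1-\delta-p\mu=-\mu$, so the nonlinear term scales like $t^{-\mu}$. With these checks supplied, your argument goes through.
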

\begin{rem}\label{rem:1.2}
\rm 
  ~\begin{enumerate}[label=(\roman*)]
    \item It is straightforward to verify that any mild solution of \eqref{mild} is also a weak solution. Consequently, all conclusions of Theorem~\ref{th1} apply directly to every mild solution.
    \item  Note that Theorem~\ref{thm:global-main} does not address the case $\varrho=0$, $p>\frac{N+\sigma_2}{N-2}$ and $N\geqslant 3$. A complete analysis of this regime lies beyond the scope of the theorem and requires separate investigation; see \cite{Zhang98} for the case $\sigma_1=\sigma_2=0$.
\item The smallness condition \eqref{small-assum} on the initial data and the forcing term is natural for the fixed-point argument. It would be interesting to investigate whether global solutions can exist for large data when $p>p^*$, or if blow-up may occur in a different manner. Furthermore, the critical case $p=p^*$ is left open and deserves further study.
\end{enumerate}
\end{rem}

Finally, we present a local existence result obtained by using a fixed point argument.
\begin{theo}
\label{thm:loc}
Let $N\ge2$ and assume that the parameters satisfy
\[
-2<\sigma_2\le \sigma_1\le0,\qquad -1<\varrho\le0,\qquad p>1.
\]
Let $q\ge1$ be such that
\begin{equation*}
    \label{assum-q}
q>\max\!\left\{\frac{Np}{N+\sigma_2},\ \frac{N(p-1)}{2+\sigma_2}\right\},
\qquad q\ge p.
\end{equation*}
Assume that the initial data and the forcing term satisfy
\[
u_0\in L^q(\mathbb{R}^N),
\qquad |x|^{-\sigma_1}\bw\in L^q(\mathbb{R}^N).
\]
Let $(\bs(t))_{t\ge0}$ be the $C_0$--semigroup generated  by the operator
$|x|^{-\sigma_1}\Delta$.
Then there exists a time $T>0$ such that the integral equation
\begin{equation}\label{eq:main}
u(t) = \mathbf{S}(t)u_0 
+ \int_0^t \mathbf{S}(t-s)\bigl(|x|^{\sigma_2-\sigma_1}\,|u(s)|^p\bigr)\,ds
+ \int_0^t \mathbf{S}(t-s)\bigl(s^{\varrho}\,|x|^{-\sigma_1} \bw(x)\bigr)\,ds.
\end{equation}
admits a unique mild solution
\[
u\in C\bigl([0,T];L^q(\mathbb{R}^N)\bigr).
\]
\end{theo}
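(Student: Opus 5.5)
We argue by a Banach fixed point in $X_T:=C([0,T];L^q(\mathbb{R}^N))$ with norm $\|u\|_{X_T}=\sup_{0\le t\le T}\|u(t)\|_{L^q}$. On the closed ball $B_M=\{u\in X_T:\|u\|_{X_T}\le M\}$ we set
\[
\Phi(u)(t)=\bs(t)u_0+\int_0^t \bs(t-s)\bigl(|x|^{\sigma_2-\sigma_1}|u(s)|^p\bigr)\,ds+\int_0^t \bs(t-s)\bigl(s^{\varrho}|x|^{-\sigma_1}\bw\bigr)\,ds ,
\]
with $M=2\bigl(C\|u_0\|_{L^q}+1\bigr)$. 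We show that $\Phi$ maps $B_M$ into itself and is a strict contraction once $T$ is small. The only tool is the smoothing estimate of Proposition~\ref{prop:weighted-estimate} for $\bs(t)$, used in the form
\[
\bigl\|\bs(t)\bigl(|x|^{-\gamma}f\bigr)\bigr\|_{L^{q}}\le C\,t^{-\frac{1}{2+\sigma_1}\left(\frac{N}{r}-\frac{N}{q}+\gamma\right)}\|f\|_{L^{r}},
\]
with $\gamma\in[0,\ldots)$ in the admissible range. Since $\sigma_2\le\sigma_1$, we take $\gamma=\sigma_1-\sigma_2\ge0$, and $1\le r\le q$.

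\textbf{Linear terms.} The bound $\|\bs(t)u_0\|_{L^q}\le C\|u_0\|_{L^q}$ is the case $\gamma=0$, $r=q$. For the forcing term, $\gamma=0$ and $r=q$ give
\[
\int_0^t s^{\varrho}\,\bigl\||x|^{-\sigma_1}\bw\bigr\|_{L^q}\,ds=\frac{t^{1+\varrho}}{1+\varrho}\,\bigl\||x|^{-\sigma_1}\bw\bigr\|_{L^q},
\]
which is finite because $\varrho>-1$ and tends to $0$ as $t\to0$. Continuity in time into $L^q$ follows from the $C_0$ property of $\bs$ and dominated convergence.

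\textbf{Nonlinear term.} We write $|x|^{\sigma_2-\sigma_1}|u|^p=|x|^{-\gamma}|u|^p$ and put $|u|^p\in L^{q/p}$, which requires $q\ge p$. This gives
\[
\Bigl\|\int_0^t\bs(t-s)\bigl(|x|^{-\gamma}|u|^p\bigr)\,ds\Bigr\|_{L^q}\le C\int_0^t (t-s)^{-\beta}\|u(s)\|_{L^q}^p\,ds\le C\,T^{1-\beta}M^p,
\]
where
\[
\beta=\frac{1}{2+\sigma_1}\Bigl(\frac{N(p-1)}{q}+\sigma_1-\sigma_2\Bigr).
\]
We need $\beta<1$, which is equivalent to
\[
\frac{N(p-1)}{q}<2+\sigma_2 ,
\]
that is, $q>\frac{N(p-1)}{2+\sigma_2}$. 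The applicability of the weighted estimate with $r=q/p$ and weight exponent $\gamma$ requires $\frac{N}{r}-\frac{N}{q}+\gamma<N\bigl(1-\frac1q\bigr)+\ldots$; this kind of condition, $\gamma<N/q'$-type (ensuring $|x|^{-\gamma}|u|^p$ is locally integrable against the dual), is where $q>\frac{Np}{N+\sigma_2}$ enters. Indeed, $\frac{Np}{q}<N+\sigma_2$ is equivalent to
\[
\frac{N}{q/p}+(\sigma_1-\sigma_2)<N+\sigma_1 ,
\]
the natural admissibility condition for the weighted $L^r\to L^q$ estimate with $r=q/p$.

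\textbf{Contraction.} For differences we use $\bigl||u|^p-|v|^p\bigr|\le p\bigl(|u|^{p-1}+|v|^{p-1}\bigr)|u-v|$ and Hölder's inequality into $L^{q/p}$. The same computation yields
\[
\|\Phi(u)-\Phi(v)\|_{X_T}\le C\,T^{1-\beta}M^{p-1}\|u-v\|_{X_T}.
\]
Choosing $T$ small makes $\Phi$ a self-map of $B_M$ with contraction constant $\tfrac12$. Banach's theorem then gives existence, and uniqueness in $X_T$ follows by the same difference estimate combined with a Gronwall-type argument on short intervals.

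\textbf{Main obstacle.} The hard step is justifying the weighted smoothing estimate from Proposition~\ref{prop:weighted-estimate} with the singular factor $|x|^{\sigma_2-\sigma_1}$ and the exact parameter range. Here we have $\gamma\ge0$ and, because $\sigma_1\le0$, the weight $|x|^{-\sigma_1}$ is degenerate rather than singular. If the proposition only covers the unweighted case, we would first split $|x|^{-\gamma}=|x|^{-\gamma}\mathbf 1_{|x|<1}+|x|^{-\gamma}\mathbf 1_{|x|\ge1}$. The bounded part is handled by the unweighted estimate. For the singular part we apply Hölder with $|x|^{-\gamma}\mathbf 1_{|x|<1}\in L^{a}$ for $a\gamma<N$, which requires lowering $r$ slightly, and the strict inequalities on $q$ leave room for this.
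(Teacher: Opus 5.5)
Your proposal is correct and follows essentially the same route as the paper. Both run a Banach fixed point in $C([0,T];L^q(\mathbb{R}^N))$, applying Proposition~\ref{prop:weighted-estimate} with $\gamma=\sigma_1-\sigma_2$ and $(q_1,q_2)=(q/p,q)$ to the nonlinearity and with $\gamma=0$ to the data and forcing; your $\beta$ is the paper's $\alpha$, and the two lower bounds on $q$ play exactly the roles you identify. Your fallback splitting of $|x|^{-\gamma}$ is unnecessary, since the proposition already handles the weight (its proof does that splitting), and your radius $M=2(C\|u_0\|_{L^q}+1)$ also covers $u_0=0$, $\bw\neq0$, where the paper's $M=2C_0\|u_0\|_{L^q}$ fails.
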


\begin{rem}\label{rem:local}
\rm 
  ~\begin{enumerate}[label=(\roman*)]
    \item The unforced case, namely $\bw=0$, commonly referred to as the Hardy–Hénon equation, was studied in \cite{BTW} for $\sigma_1=0$, where the parameter $\gamma$ corresponds to $-\sigma_2$ in the present notation.
\item
The condition
\[
q>\frac{Np}{N+\sigma_2}
\]
ensures that the weighted semigroup estimate in Proposition~2.1 can be applied to the nonlinear term
$|x|^{\sigma_2-\sigma_1}|u|^p$ with $(q_1,q_2)=(q/p,q)$.

\item
The restriction
\[
q>\frac{N(p-1)}{2+\sigma_2}
\]
guarantees that the associated time singularity $(t-s)^{-\alpha}$ arising in the convolution kernel
is integrable, which is essential for the fixed-point argument. Here $\alpha$ is given by \eqref{alpha-loc} below.
\item
The assumption $q\ge p$ ensures that $|u|^p\in L^{q/p}(\mathbb{R}^N)$ whenever $u\in L^q(\mathbb{R}^N)$.

\item
Unlike the global existence result in Theorem~\ref{thm:global-main}, the present local theory allows the
borderline case $\sigma_2=\sigma_1$, since no decay in time is required for the nonlinear term.

\item
No sign condition on $\bw$ is imposed in the local existence result; only the integrability condition
$|x|^{-\sigma_1}\bw\in L^q(\mathbb{R}^N)$ is needed.

    \end{enumerate}
    \end{rem}
The article is organized as follows. In Section~\ref{sec:prelim}, we introduce the notation and collect the preliminary results needed throughout the paper. Section~\ref{sec:nonglobal} is devoted to the proof of Theorem~\ref{th1}, which establishes the nonexistence of global solutions. The global existence result is addressed in Section~\ref{sec:global}, where we prove Theorem~\ref{thm:global-main}. In Section~\ref{sec:local}, we establish the local existence result in Lebesgue spaces and provide the proof of Theorem~\ref{thm:loc}. Finally, Section~\ref{sec:conc} concludes the paper with a summary of the main results and a discussion of possible directions for future research.

Throughout the paper, the positive constant $C$ may change from line to line.

\section{Useful tools \& Auxiliary results}
\label{sec:prelim}
In this section, we introduce the notation used throughout the paper and present several auxiliary results and estimates. 

We begin by recalling several fundamental estimates for the heat semi-group
associated with the operator $|x|^{\alpha}\Delta$, where $\alpha\in(0,2)$.
This is relevant in our setting because equation~\eqref{main} can be rewritten as
\begin{equation}\label{main-bis}
    u_t = |x|^{\alpha}\Delta u 
          + |x|^{\sigma_2-\sigma_1}\,|u|^{p}
          + t^{\varrho}\,|x|^{-\sigma_1}\,\bw(x),
\end{equation}
with the identification $\alpha=-\sigma_1<2$.
Thus, the linear diffusion part in \eqref{main-bis} falls precisely within the
framework of the degenerate operator $|x|^{\alpha}\Delta$.
\begin{theo}[$L^a$–$L^b$ estimates for degenerate heat semigroups]\quad\\ \label{thm:hk-LpLq-alpha<2}
Let $N \ge 1$, $0 \le \alpha < 2$, and $1 < a, b < \infty$. Suppose the exponents $a,b$ satisfy
\begin{equation*}\label{Asumps}
    \begin{cases}
        \displaystyle \frac{1}{b} < \frac{1}{a} < 2 - \alpha, &\text{if } N = 1, \\[1.5ex]
        \displaystyle \frac{1}{b} < \frac{1}{a} < 1 - \frac{\alpha}{N}, &\text{if } N \ge 2.
    \end{cases}
\end{equation*}
Then the degenerate operator $L := |x|^{\alpha} \Delta$ generates a $C_0$-semigroup $(\mathbf{S}(t))_{t > 0}$, and there exists a constant $C = C(N, \alpha, a, b) > 0$ such that for all $t > 0$ and $\varphi \in L^a(\mathbb{R}^N)$,
\begin{equation}\label{eq:LpLq}
    \|\mathbf{S}(t)\varphi\|_{L^{b}(\mathbb{R}^N)} \le 
    C \, t^{\,-\frac{N}{2-\alpha}\left(\frac{1}{a} - \frac{1}{b}\right)} \|\varphi\|_{L^{a}(\mathbb{R}^N)}.
\end{equation}
\end{theo}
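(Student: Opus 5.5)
Since the estimate \eqref{eq:LpLq} is invariant under the scaling of $L=|x|^{\alpha}\Delta$, the plan is to prove it at $t=1$ via kernel bounds and then recover all $t>0$ by dilation. If $u$ solves $u_t=|x|^\alpha\Delta u$, then so does $u_\lambda(t,x)=u(\lambda^{2-\alpha}t,\lambda x)$. Hence $\mathbf{S}(t)\varphi(x)=[\mathbf{S}(1)\varphi(t^{1/(2-\alpha)}\,\cdot)](t^{-1/(2-\alpha)}x)$. Computing $L^a$ and $L^b$ norms of the dilations then produces exactly the factor $t^{-\frac{N}{2-\alpha}(\frac1a-\frac1b)}$. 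It therefore suffices to show that $\mathbf{S}(1):L^a\to L^b$ is bounded for the admissible range of $(a,b)$.

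For generation, I would view $L$ on the weighted space $L^2(|x|^{-\alpha}dx)$, where $L$ is symmetric and nonpositive: $\langle L u,u\rangle_{|x|^{-\alpha}}=-\int|\nabla u|^2$. The Friedrichs extension then gives an analytic contraction semigroup, and the semigroup is sub-Markovian by Beurling–Deny. To obtain a $C_0$-semigroup on unweighted $L^a$, I would use the known explicit structure instead. Writing $u$ in spherical harmonics and applying the radial change of variables $z=r^{(2-\alpha)/2}$ (as in the transformation preceding \eqref{main-rad-final}) converts each mode into a Bessel-type heat equation in effective dimension $\overline{N}=2(N-\alpha)/(2-\alpha)$. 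This yields a kernel $\mathbf{S}(1)\varphi(x)=\int K(x,y)\varphi(y)\,dy$ with a Gaussian-type upper bound
\[
0\le K(x,y)\le C|y|^{-\alpha}\,\exp\!\bigl(-c\,\bigl||x|^{\frac{2-\alpha}{2}}-|y|^{\frac{2-\alpha}{2}}\bigr|^2\bigr)\,(\text{angular Gaussian factor}),
\]
which is equivalent to known two-sided bounds for $|x|^\alpha\Delta$ (Metafune–Negro–Spina type). The factor $|y|^{-\alpha}$ reflects the reference measure $|y|^{-\alpha}dy$.

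The key step is the $L^a\to L^b$ bound for this kernel at $t=1$. I would split $\mathbb{R}^N$ into the regions $|x|,|y|\le 2$ and the complement. Away from the origin, $K$ behaves like the standard heat kernel, and Young's inequality gives the bound there. Near the origin, the singular factor $|y|^{-\alpha}$ must be controlled: by Hölder, $|y|^{-\alpha}\varphi\in L^1_{\rm loc}$ requires $|y|^{-\alpha}\in L^{a'}_{\rm loc}$, i.e.\ $\alpha a'<N$, which is precisely $\frac1a<1-\frac{\alpha}{N}$. (For $N=1$ the analogous computation with effective dimension gives $\frac1a<2-\alpha$.) A Schur test with power weights then handles the near-origin block, while the condition $\frac1b<\frac1a$ ensures the off-diagonal decay is summable. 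Strong continuity on $L^a$ follows from uniform boundedness in $L^a$ together with convergence on $C_c^\infty$.

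I expect the main obstacle to be establishing the kernel bound uniformly across the angular modes, in particular showing that the $|y|^{-\alpha}$ singularity is the only defect near $0$. I would first try subordination or comparison: bound $K$ by the radial (zero-mode) kernel, which is explicit in terms of $I_{\nu}$ Bessel functions, using positivity and the maximum principle. The radial kernel can then be estimated by the standard asymptotics of $I_\nu$.
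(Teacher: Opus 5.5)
The paper gives no proof of this statement. It is imported as Theorem~5.1 of the reference labelled CCM-2016, and the only piece the paper re-derives is the dilation identity \eqref{scaling} (with $\alpha=-\sigma_1$), used in Step~2 of Proposition~\ref{prop:weighted-estimate}. Your reduction to $t=1$ is that same identity and is correct. Your H\"older observation ($|y|^{-\alpha}\in L^{a'}_{\mathrm{loc}}$ iff $\frac1a<1-\frac{\alpha}{N}$) is also the right mechanism for $N\ge2$. As a self-contained proof, however, the proposal has a genuine gap: all the work sits in the kernel bound, which is neither correctly formulated nor reachable by the route you suggest.

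\textbf{Domination by the zero-mode kernel fails.} The radial kernel is the spherical average $K_0(x,y)=|S^{N-1}|^{-1}\int_{S^{N-1}}K(x,|y|\omega)\,d\omega$. Positivity and the nonnegative centrifugal term give $0\le p_k\le p_0$ for each angular mode separately. But $K=\sum_k p_k(|x|,|y|)Z_k(\hat x\cdot\hat y)$ is not pointwise controlled by $K_0$: already for $\alpha=0$, $t=1$, one has $K(x,x)\simeq 1$ while $K_0(x,x)\simeq |x|^{1-N}$.

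\textbf{Your displayed majorant is too weak.} At $x=y$ with $|x|$ large it gives $|x|^{-\alpha}$. The true size is $K(x,x)\simeq |x|^{-N\alpha/2}$, since locally $\mathbf S(1)$ looks like the Euclidean heat semigroup at time $|x|^{\alpha}$. For $N\ge3$, the missing factor $(1+|x|)^{-\alpha(N-2)/2}$ is exactly what the far-field Young step needs.

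What is required is a Gaussian bound for $p:=K\,|y|^{\alpha}$, the kernel relative to $m=|y|^{-\alpha}dy$:
\[
p(1,x,y)\lesssim m(B_d(x,1))^{-1}e^{-c\,d(x,y)^2},
\]
with $d(x,y)\simeq\bigl||x|^{-\alpha/2}x-|y|^{-\alpha/2}y\bigr|$ and $m(B_d(x,1))\simeq(1+|x|)^{\alpha(N-2)/2}$. This follows, for instance, from volume doubling plus a Poincar\'e inequality (Grigor'yan--Saloff-Coste, Sturm) after the bi-Lipschitz change $z=|x|^{-\alpha/2}x$. That change turns $\int|\nabla u|^2dx$ on $L^2(m)$ into a uniformly elliptic form with the power weight $|z|^{\alpha(N-2)/(2-\alpha)}$ in both the energy and the reference measure. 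Alternatively the bound can simply be cited, which is in effect what the paper does.

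With that bound in hand, your outline for $N\ge2$ goes through, but ``away from the origin'' must be handled annulus by annulus. The kernel at $t=1$ is not a convolution kernel: its spread at $|x|\simeq R$ is $R^{\alpha/2}$. Young on each annulus gives the factor $R^{-\frac{N\alpha}{2}(\frac1a-\frac1b)}$, and the annuli are then summed using $b\ge a$.

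\textbf{The case $N=1$ does not follow from your construction.} Take $0<\alpha<1$. Then $m$ is locally finite and $C_c^\infty(\mathbb{R})$ imposes nothing at $0$, so your Friedrichs extension is the regular diffusion passing through the origin. Its kernel $p(1,x,y)$ is continuous and positive at $(0,0)$, hence
\[
\mathbf S(1)|\varphi|(x)\ge c\int_{|y|<\delta}|y|^{-\alpha}|\varphi(y)|\,dy \quad\text{for } |x|<\delta .
\]
Your own H\"older argument then forces $\frac1a<1-\alpha$, and for $1-\alpha\le\frac1a<1$ this semigroup does not act on $L^a(\mathbb{R})$ at all. Yet the stated condition $\frac1a<2-\alpha$ is void there.

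The threshold $2-\alpha$ comes from a kernel vanishing like $|x|\,|y|$ at the origin. Then $K\lesssim |x|\,|y|^{1-\alpha}$, and one needs $(\alpha-1)a'<1$; this is absorbing behaviour at $0$. It is forced when $1\le\alpha<2$: there $m$ is not locally finite, $\overline N\le 0$, and one must start from $C_c^\infty(\mathbb{R}\setminus\{0\})$. For $0<\alpha<1$, however, it corresponds to a different realization of $L$ than the one you build. The one-dimensional case therefore needs its own formulation and argument. The paper avoids this only because it never uses $N=1$.
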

\begin{rem}
\rm 
   ~ \begin{enumerate}[label=(\roman*)]
        \item Theorem~\ref{thm:hk-LpLq-alpha<2} corresponds to \cite[Theorem~5.1]{CCM-2016}.
        
        \item The validity of the smoothing estimate \eqref{eq:LpLq} depends essentially on the parameter $\alpha$. 
        For the remaining cases—namely the singular case $\alpha < 0$, the borderline case $\alpha = 2$, and the super‑degenerate case $\alpha > 2$—a summary of the situation can be found in \cite{CCM-2016}.
        
        \item For further details and other heat kernel estimates, we refer to \cite{CCM-2016, Feller, Spina, Metaf-2012, Dirich-forms}.
    \end{enumerate}
\end{rem}
 We record below a smoothing estimate for the semigroup generated by the degenerate operator 
$|x|^{-\sigma_{1}}\Delta$, which will be used in the proof of Theorem~\ref{thm:global-main}.
\begin{prop}
\label{prop:weighted-estimate}
Let $N \geq 2$, $-2 < \sigma_1 \leq 0$, and let $\bs(t)$ be the $C_0$-semigroup generated by $|x|^{-\sigma_1}\Delta$. 
Assume $0 \leq \gamma < N$ and let $1 < q_1, q_2 < \infty$ satisfy
\begin{equation}\label{cond}
0 < \frac{1}{q_2} < \frac{\gamma}{N} + \frac{1}{q_1} < 1 + \frac{\sigma_1}{N}.
\end{equation}
Then there exists a constant $C > 0$ such that for every $t > 0$ and every $\varphi \in L^{q_1}(\mathbb{R}^N)$,
\begin{equation}\label{q-2-q-1}
\big\| \bs(t) (|\cdot|^{-\gamma} \varphi) \big\|_{L^{q_2}}
\le C \,
t^{-\frac{N}{2+\sigma_1}\left(\frac{1}{q_1} - \frac{1}{q_2}\right)
- \frac{\gamma}{2+\sigma_1}}
\,
\|\varphi\|_{L^{q_1}} .
\end{equation}
\end{prop}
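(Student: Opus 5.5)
Write $\alpha=-\sigma_1\in[0,2)$, so that $2-\alpha=2+\sigma_1$ and $\bs(t)$ is exactly the semigroup of Theorem~\ref{thm:hk-LpLq-alpha<2}. The plan is to absorb the singular weight $|x|^{-\gamma}$ into the Lebesgue exponent with Hölder's inequality, apply the $L^a$–$L^b$ estimate \eqref{eq:LpLq}, and check that the time exponents agree.

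\textbf{Step 1: weighted Lorentz bound.} Note that $|x|^{-\gamma}\in L^{N/\gamma,\infty}(\mathbb R^N)$. Define $a$ by
\[
\frac1a=\frac{\gamma}{N}+\frac1{q_1}.
\]
By \eqref{cond} we have $0<1/a<1+\sigma_1/N=1-\alpha/N<1$, so $1<a<\infty$. O'Neil's Hölder inequality in Lorentz spaces then gives
\[
\||\cdot|^{-\gamma}\varphi\|_{L^{a,q_1}}\le C\|\varphi\|_{L^{q_1}}.
\]
Since $q_1>a$, this is only a Lorentz bound and not an $L^a$ bound. This gap is the main obstacle.

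\textbf{Step 2: upgrade the semigroup estimate to Lorentz spaces.} Choose exponents $a_0<a<a_1$ close to $a$ so that both pairs $(a_0,q_2)$ and $(a_1,q_2)$ satisfy
\[
\frac1{q_2}<\frac1{a_i}<1-\frac{\alpha}{N}.
\]
This is possible because both inequalities are strict in \eqref{cond}. Theorem~\ref{thm:hk-LpLq-alpha<2} gives $\bs(t):L^{a_i}\to L^{q_2}$ with norms $Ct^{-\frac{N}{2-\alpha}(\frac1{a_i}-\frac1{q_2})}$.

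By real interpolation (Marcinkiewicz, or the $K$-method), $\bs(t)$ maps $(L^{a_0},L^{a_1})_{\theta,q_1}=L^{a,q_1}$ into $(L^{q_2},L^{q_2})_{\theta,q_1}=L^{q_2}$. The operator norm is bounded by the geometric mean of the endpoint norms, which equals $Ct^{-\frac{N}{2-\alpha}(\frac1a-\frac1{q_2})}$. The interpolation parameter $\theta$ is fixed by $1/a=(1-\theta)/a_0+\theta/a_1$, and this makes the exponents match exactly.

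An alternative route avoids interpolation. One would show directly a weighted estimate $L^{q_1}(|x|^{-\gamma}\cdot)\to L^{q_2}$ via the Gaussian-type kernel bounds for $|x|^{\alpha}\Delta$ from \cite{CCM-2016}, followed by a Stein–Weiss type argument. I would use interpolation as the primary route.

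\textbf{Step 3: check the exponent.} Combining Steps 1 and 2,
\[
\frac{N}{2+\sigma_1}\Bigl(\frac1a-\frac1{q_2}\Bigr)
=\frac{N}{2+\sigma_1}\Bigl(\frac1{q_1}-\frac1{q_2}\Bigr)+\frac{\gamma}{2+\sigma_1},
\]
which is exactly the power of $t$ in \eqref{q-2-q-1}.

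The case $\gamma=0$ is immediate from Theorem~\ref{thm:hk-LpLq-alpha<2}, since then $a=q_1$. The condition $\gamma<N$ is used only to make $|x|^{-\gamma}$ a weak-$L^{N/\gamma}$ function.

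The delicate point is Step 2. One must make sure the semigroup from \cite{CCM-2016} is the same operator on all the $L^{a_i}$ spaces, i.e. that it is consistent, so that interpolation applies. This should follow from density of $C_0^\infty$ functions and the fact that the semigroup is given by a kernel.
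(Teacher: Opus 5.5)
Your argument is correct, but it takes a different route from the paper.

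\textbf{The paper's route.} The paper first proves the bound only at $t=1$. It splits $|x|^{-\gamma}=\psi_1+\psi_2$, with $\psi_1\in L^{m-\varepsilon}$ the part near the origin, $\psi_2\in L^{m+\delta}$ the part at infinity, and $m=N/\gamma$. Ordinary H\"older puts $\psi_i\varphi$ into $L^{r_i}$, with $r_1<a<r_2$ on either side of your exponent $a$. It then applies \eqref{eq:LpLq} at $t=1$ to each piece. The exact power of $t$ is recovered afterwards from the dilation identity $D_\lambda^{-1}\mathbf{S}(t)D_\lambda=\mathbf{S}(\lambda^{2+\sigma_1}t)$ together with the homogeneity $D_\lambda(|\cdot|^{-\gamma}\varphi)=\lambda^{-\gamma}|\cdot|^{-\gamma}D_\lambda\varphi$. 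Fixing $t=1$ is essential there, since at a general time the two pieces would carry different powers of $t$.

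\textbf{Your route.} You replace the splitting with Lorentz-space tools: O'Neil's inequality and the identification $(L^{a_0},L^{a_1})_{\theta,q_1}=L^{a,q_1}$. The geometric mean of the two power-law bounds from \eqref{eq:LpLq} then gives the correct exponent directly at every $t>0$. No scaling of the semigroup is needed.

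\textbf{What each buys.} The paper's route uses only H\"older's inequality and a change of variables. Yours uses neither the homogeneity of the operator nor that of the weight, so it works for any weight in $L^{N/\gamma,\infty}$. Moreover, $(L^{q_2},L^{q_2})_{\theta,s}=L^{q_2}$ for every $s$, so you could take the second interpolation index to be $\infty$ and obtain $\mathbf{S}(t)\colon L^{a,\infty}\to L^{q_2}$ with the same decay.

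\textbf{Consistency.} The consistency of the semigroup on $L^{a_0}+L^{a_1}$ that you flag is needed equally in the paper's proof. There $\mathbf{S}(1)$ is applied separately to $\psi_1\varphi\in L^{r_1}$ and $\psi_2\varphi\in L^{r_2}$, and the results are added.

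One harmless slip: when $\sigma_1=0$ you have $1-\alpha/N=1$, not $<1$, but $1/a<1$ still follows from \eqref{cond}.
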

The proof of Proposition~\ref{prop:weighted-estimate} follows the general 
strategy of \cite[Proposition~2.1]{BTW}, where the weight $|x|^{-\gamma}$ 
is decomposed and controlled by suitable Hölder estimates. In the present 
setting, the argument is further combined with the homogeneity properties 
and the $L^{p}$--$L^{q}$ bounds for the semigroup $(\bs(t))_{t>0}$ 
derived in \cite{CCM-2016}, which capture the scaling effects generated by 
the degenerate coefficient $|x|^{-\sigma_1}$. For completeness, a full 
proof is given below.

\begin{proof}[Proof of Proposition~\ref{prop:weighted-estimate}]
The proof proceeds in two steps: first we establish the estimate at $t = 1$, then we extend it to arbitrary $t > 0$ by a scaling argument.

\noindent
\textbf{Step 1: The case $t = 1$.} 
Set $m = N/\gamma$ (if $\gamma = 0$ the statement is trivial). 
Choose $\varepsilon, \delta > 0$ such that $\varepsilon < m$, and 
\begin{equation}
    \label{eq:eps-delt-1}
    \frac{1}{q_2} < \frac{1}{m+\delta} + \frac{1}{q_1} < \frac{1}{m-\varepsilon} + \frac{1}{q_1} < 1+\frac{\sigma_1}{N}.
\end{equation}
Such $\varepsilon, \delta$ exist because of \eqref{cond}. 
Decompose the potential as
\[
|\cdot|^{-\gamma} = \psi_1 + \psi_2,
\]
with $\psi_1 \in L^{m-\varepsilon}(\mathbb{R}^N)$, $\psi_2 \in L^{m+\delta}(\mathbb{R}^N)$.
Define
\begin{equation}
    \label{eq:eps-delt-2}
\frac{1}{r_1} = \frac{1}{m-\varepsilon} + \frac{1}{q_1}, \qquad
\frac{1}{r_2} = \frac{1}{m+\delta} + \frac{1}{q_1}.
\end{equation}
By construction, $0 < \frac{1}{r_1}, \frac{1}{r_2} < 1$ and $\frac{1}{q_2} < \frac{1}{r_2} < \frac{1}{r_1}$. 
Moreover, by taking $\varepsilon$ sufficiently small we can ensure that
\begin{equation}
    \label{eq:eps-delt-3}
\frac{1}{r_1} < 1 + \frac{\sigma_1}{N}.
\end{equation}

Combining \eqref{eq:eps-delt-1}, \eqref{eq:eps-delt-2}, and \eqref{eq:eps-delt-3}, we are in a position to apply the standard smoothing estimate \eqref{eq:LpLq} to the pairs $(a,b)=(q_2,r_1)$ and $(a,b)=(q_2,r_2)$. As a consequence, we obtain
\begin{equation}\label{smoothing}
\|\bs(1) f\|_{L^{q_2}} \le C \|f\|_{L^{r_i}}, 
\qquad i=1,2.
\end{equation}

Next, we decompose the weight and estimate
\[
\|\bs(1)(|\cdot|^{-\gamma}\varphi)\|_{q_2} 
\le 
\|\bs(1)(\psi_1 \varphi)\|_{q_2} 
+ 
\|\bs(1)(\psi_2 \varphi)\|_{q_2}.
\]
For $i = 1,2$, H\"older's inequality gives
\[
\|\psi_1 \varphi\|_{r_1} \le \|\psi_1\|_{L^{m-\varepsilon}}  \|\varphi\|_{q_1},\quad \|\psi_2 \varphi\|_{r_2} \le \|\psi_2\|_{L^{m+\delta}}  \|\varphi\|_{q_1}.
\]
Applying \eqref{smoothing} we obtain
\[
\|\bs(1)(\psi_1 \varphi)\|_{q_2} \le C \|\psi_1\|_{L^{m-\varepsilon}} \|\varphi\|_{q_1},\quad \|\bs(1)(\psi_2 \varphi)\|_{q_2} \le C \|\psi_2\|_{L^{m+\delta}} \|\varphi\|_{q_1}.
\]
Summing the two contributions yields
\begin{equation}\label{static}
\|\bs(1)(|\cdot|^{-\gamma} \varphi)\|_{q_2} \le C_1 \|\varphi\|_{q_1},
\end{equation}
with $C_1 = C(\|\psi_1\|_{L^{m-\varepsilon}} + \|\psi_2\|_{L^{m+\delta}})$.

\noindent
\textbf{Step 2: Scaling argument.}
Let $D_\lambda$ be the dilation operator $D_\lambda \varphi(x) = \varphi(\lambda x)$, $\lambda > 0$. 
One has $\|D_\lambda \varphi\|_r = \lambda^{-N/r} \|\varphi\|_r$ and
$D_\lambda(|\cdot|^{-\gamma} \varphi) = \lambda^{-\gamma} |\cdot|^{-\gamma} D_\lambda \varphi$.
A direct computation based on the homogeneity of the operator yields the following scaling property (see also \cite[Lemma 3.1]{CCM-2016}).
\begin{equation}\label{scaling}
D_\lambda^{-1} \bs(t) D_\lambda = \bs(\lambda^{2+\sigma_1} t).
\end{equation}
Take $\lambda = t^{-1/(2+\sigma_1)}$; then $\lambda^{2+\sigma_1} t = 1$ and \eqref{scaling} gives
\[
\bs(t) = D_\lambda \bs(1) D_\lambda^{-1}.
\]
Now compute
\begin{align*}
\bs(t)(|\cdot|^{-\gamma} \varphi)
&= D_\lambda \bs(1) D_\lambda^{-1} (|\cdot|^{-\gamma} \varphi) \\
&= D_\lambda \bs(1) \bigl( \lambda^{\gamma} |\cdot|^{-\gamma} D_\lambda^{-1} \varphi \bigr) \\
&= \lambda^{\gamma} D_\lambda \bigl[ \bs(1)(|\cdot|^{-\gamma} D_\lambda^{-1} \varphi) \bigr].
\end{align*}
Taking the $L^{q_2}$-norm,
\begin{align*}
\|\bs(t)(|\cdot|^{-\gamma} \varphi)\|_{q_2}
&= \lambda^{\gamma} \| D_\lambda [ \bs(1)(|\cdot|^{-\gamma} D_\lambda^{-1} \varphi) ] \|_{q_2}\\&
= \lambda^{\gamma} \lambda^{-N/q_2} \| \bs(1)(|\cdot|^{-\gamma} D_\lambda^{-1} \varphi) \|_{q_2}.
\end{align*}
Applying \eqref{static} to $D_\lambda^{-1} \varphi$,
\[
\| \bs(1)(|\cdot|^{-\gamma} D_\lambda^{-1} \varphi) \|_{q_2}
\le C_1 \| D_\lambda^{-1} \varphi \|_{q_1}
= C_1 \lambda^{N/q_1} \|\varphi\|_{q_1}.
\]
Hence,
\[
\|\bs(t)(|\cdot|^{-\gamma} \varphi)\|_{q_2}
\le C_1 \lambda^{\gamma + N(1/q_1 - 1/q_2)} \|\varphi\|_{q_1}.
\]
Substituting $\lambda = t^{-1/(2+\sigma_1)}$ we finally obtain
\begin{align*}
\|\bs(t)(|\cdot|^{-\gamma} \varphi)\|_{q_2}
&\le C_1 \, t^{-\frac{1}{2+\sigma_1}\bigl[ \gamma + N \bigl( \frac{1}{q_1} - \frac{1}{q_2} \bigr) \bigr]} \|\varphi\|_{q_1}\\&
= C_1 \, t^{-\frac{N}{2+\sigma_1}\bigl( \frac{1}{q_1} - \frac{1}{q_2} \bigr) - \frac{\gamma}{2+\sigma_1}} \|\varphi\|_{q_1}.
\end{align*}
Thus \eqref{q-2-q-1} holds with $C = C_1$.
\end{proof}

The next lemma plays a key role in building the functional setting needed to apply the fixed-point argument for establishing global existence of mild solutions.
\begin{lem}
\label{lem:f-p}
Let $N\ge 2$, $\sigma_{1}>-2$, $\sigma_{2}>-2$, and $-1<\varrho< 0$.  
Define 
\[
A:=2+\sigma_{1}>0,\qquad 
f(p)=\varrho A p^{2}-(N-2+\varrho A)p+(N+\sigma_{2}),
\]
and
\[
p^{*}=\frac{N+\sigma_{2}-\varrho A}{\,N-2-\varrho A\,}.
\]
Then 
\[
f(p)< 0\qquad\text{for all }p\ge p^{*}.
\]
\end{lem}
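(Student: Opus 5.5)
The plan is to use a purely algebraic rearrangement of $f$ that makes its sign visible at and beyond $p^*$. Set $D:=N-2-\varrho A$ and $M:=N+\sigma_2-\varrho A$, so that $p^*=M/D$.

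First I record the signs that the hypotheses give. Since $-1<\varrho<0$ and $A>0$, we have $\varrho A<0$. Together with $N\ge 2$ this gives $D>0$, so $p^*$ is well defined. Moreover $M-D=2+\sigma_2>0$, hence $p^*>1$.

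Next I rewrite the coefficients of $f$ in terms of $D$ and $M$. The linear coefficient is $N-2+\varrho A = D+2\varrho A$, and the constant term is $N+\sigma_2 = M+\varrho A$. Substituting these and regrouping the $\varrho A$ terms into a perfect square should give the identity
\[
f(p)=\varrho A\,(p-1)^2-(Dp-M)=\varrho A\,(p-1)^2-D\,(p-p^*).
\]
This is a direct expansion to be checked line by line: $\varrho A(p^2-2p+1)-Dp+M$ reproduces $\varrho Ap^2-(D+2\varrho A)p+(M+\varrho A)$.

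The conclusion then follows from the sign of each term for $p\ge p^*$. The first term $\varrho A(p-1)^2$ is strictly negative, because $\varrho A<0$ and $p\ge p^*>1$. The second term $-D(p-p^*)$ is nonpositive, because $D>0$. Hence $f(p)<0$ for all $p\ge p^*$, and at $p=p^*$ itself $f(p^*)=\varrho A(p^*-1)^2<0$.

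There is no genuine obstacle here. The only step needing care is spotting the completed-square identity, and verifying the signs of $D$ and of $M-D$ from the standing assumptions $N\ge2$, $\varrho<0$, and $\sigma_2>-2$.
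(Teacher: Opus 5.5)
Your proof is correct, and it takes a different route from the paper's.

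The paper argues by calculus. It notes that $f$ is concave and computes $f(p^*)=\varrho A(\sigma_2+2)^2/(N-2-\varrho A)^2<0$. It then computes $f'(p^*)=B/(N-2-\varrho A)$ and rewrites the numerator as $B=-\bigl[(N-2-\varrho A)^2-2\varrho A(\sigma_2+2)\bigr]<0$. From this it concludes that $f$ is nonincreasing on $[p^*,\infty)$.

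Your identity $f(p)=\varrho A(p-1)^2-D(p-p^*)$ checks out term by term. It replaces all of the paper's computation with a single sign inspection that uses only $D>0$ and $p^*>1$. It also contains the paper's intermediate facts:
\begin{itemize}
\item At $p=p^*$ it gives $f(p^*)=\varrho A(p^*-1)^2=\varrho A(\sigma_2+2)^2/D^2$, since $p^*-1=(M-D)/D=(2+\sigma_2)/D$.
\item Differentiating it gives $f'(p)=2\varrho A(p-1)-D<0$ for every $p\ge 1$, so the monotonicity the paper derives is immediate.
\end{itemize}

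Beyond that, you obtain the quantitative bound $f(p)\le \varrho A(p-1)^2$ on $[p^*,\infty)$, which the concavity argument does not display. Your version also makes clear why the inequality is strict at $p^*$ itself: the strictness comes entirely from the term $\varrho A(p-1)^2$, that is, from $\varrho<0$.

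The paper's route is the more routine one to find. Yours is shorter and more transparent.
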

\begin{proof}[Proof of Lemma~\ref{lem:f-p}]
Since $\varrho<0$ and $A>0$, we have $\varrho A<0$, so the quadratic polynomial $f$ is concave.

We first verify that $f(p^{*})<0$ and $f'(p^{*})<0$. A straightforward substitution yields
\[
f(p^{*})=\frac{\varrho A\,(\sigma_{2}+2)^{2}}{(N-2-\varrho A)^{2}}<0,
\]
where the inequality follows immediately from $\varrho A<0$. Moreover,
\[
f'(p^{*})
   =\frac{2\varrho A (N+\sigma_{2})-(\varrho A)^{2}-(N-2)^{2}}
          {\,N-2-\varrho A\,}
   \equiv \frac{B}{\,N-2-\varrho A\,}.
\]
The numerator $B$ can be expressed as
\[
\begin{aligned}
B
 &= -\Big[(\varrho A)^{2}+(N-2)^{2}-2\varrho A (N+\sigma_{2})\Big] \\
 &= -\Big[(N-2-\varrho A)^{2}-2\varrho A (\sigma_{2}+2)\Big].
\end{aligned}
\]
Since $\sigma_{2}+2>0$ and $\varrho A<0$, the term $-2\varrho A(\sigma_{2}+2)$ is positive, which implies $B<0$ and therefore $f'(p^{*})<0$.

Because $f$ is concave and satisfies $f'(p^{*})<0$, it is nonincreasing on $[p^{*},\infty)$. Combined with $f(p^{*})<0$, we conclude that
\[
f(p)<0 \qquad \text{for all } p\ge p^{*}.
\]
This completes the proof.
\end{proof}

\section{Nonexistence of global solutions}
\label{sec:nonglobal}
This section presents the proof of Theorem~\ref{th1}, which proceeds via a case analysis.\\
\subsubsection{The case \texorpdfstring{$-1<\varrho< 0$ and $p<\frac{N+\sigma_2-\varrho(2+\sigma_1)}{N-2-\varrho(2+\sigma_1)}$}{-1<gamma<0 and p<(N+sigma2-gamma(2+sigma1))/(N-2-gamma(2+sigma1))}} The proof proceeds by contradiction. Assume that the problem \eqref{main} admits a weak global solution $u$.
We introduce a test function
\begin{equation*}\label{3}
\phi(t,x)\in C^1((0,\infty); C^2(\mathbb{R}^N)),
\end{equation*} defined by
$$
\phi(t,x)=\psi^{\frac{p}{p-1}}\left(\frac{t}{T}\right)\varphi^{\frac{2p}{p-1}}\left({\frac{|x|}{R}}\right),
$$
where the cutoff functions $\psi$ and $\phi$ are given by 
$$
\psi(s)=\begin{cases}
1 \hspace{0.12cm}\text{if}\hspace{0.12cm} 1/2\leq s\leq 3/4,\\
0 \hspace{0.12cm}\text{if}\hspace{0.12cm} s\in [0,1/4]\cup[4/5,\infty),
\end{cases}\quad\varphi(s)=\begin{cases}
1 \hspace{0.12cm}\text{if}\hspace{0.12cm}  s\in [0,1],\\
0 \hspace{0.12cm}\text{if}\hspace{0.12cm} s\geq 2.
\end{cases}
$$ Here $T>>1$ and $R>>1$ denote sufficiently large positive numbers.
This choice will be used to ensure that 
$$
\int\limits_{\mathbb{R}^N}u_0(x)\phi(0,x)dx=0.
$$
Therefore, from the Definition \ref{defn:weak-solution} of weak solutions we obtain
\begin{align*}
&\int\limits_0^T\int\limits_{\mathbb{R}^N}\left[|x|^{\sigma_2}|u(t,x)|^p+t^\varrho\mathbf{w}(x)\right]\phi(t,x)dxdt\\&\leq\int\limits_0^T\int\limits_{\mathbb{R}^N}\left( |u(t,x)||\Delta_x\phi(t,x)|+|x|^{\sigma_1}|u(t,x)||\phi_t(t,x)|\right)dxdt.
\end{align*}
By H\"{o}lder inequality together with the $\varepsilon$-Young inequality, we deduce
\begin{align*}
\int\limits_0^T\int\limits_{\mathbb{R}^N}|x|^{\sigma_1}|u||\phi_t|dxdt&\leq \left(\int\limits_0^T\int\limits_{\mathbb{R}^N}|x|^{\sigma_2}|u|^p\phi dxdt\right)^\frac{1}{p}\left(\int\limits_0^T\int\limits_{\mathbb{R}^N} {|\phi_t|^{\frac{p}{p-1}}}|x|^{\frac{\sigma_1 p-\sigma_2}{p-1}}{|\phi|^{-\frac{1}{p-1}}}dxdt\right)^{\frac{p-1}{p}}\\&\leq {\frac{1}{4}}\int\limits_0^T\int\limits_{\mathbb{R}^N} |x|^{\sigma_2}|u|^p dxdt +{C}\int\limits_0^T\int\limits_{\mathbb{R}^N}{|\phi_t|^{\frac{p}{p-1}}}|x|^{\frac{\sigma_1 p-\sigma_2}{p-1}}{|\phi|^{-\frac{1}{p-1}}}dxdt,
\end{align*}
and similarly,
\begin{align*}
\int\limits_0^T\int\limits_{\mathbb{R}^N}|u||\Delta_x\phi| dxdt&\leq \left(\int\limits_0^T\int\limits_{\mathbb{R}^N}|x|^{\sigma_2}|u|^p\phi dxdt\right)^\frac{1}{p}\left(\int\limits_0^T\int\limits_{\mathbb{R}^N}|x|^{-\frac{\sigma_2}{p-1}}{|\Delta_x \phi|^{\frac{p}{p-1}}}{|\phi|^{-\frac{1}{p-1}}}dxdt\right)^\frac{p-1}{p} \\&\leq{\frac{1}{4}}\int\limits_0^T\int\limits_{\mathbb{R}^N}|x|^{\sigma_2}|u|^p\phi dxdt+{C}\int\limits_0^T\int\limits_{\mathbb{R}^N}|x|^{-\frac{\sigma_2}{p-1}}{|\Delta_x \phi|^{\frac{p}{p-1}}}{|\phi|^{-\frac{1}{p-1}}}dxdt,
\end{align*} {where $C$ denotes a generic constant.}
{Hence, it follows that
\begin{equation}\label{4}
\begin{split}
\int\limits_0^T\int\limits_{\mathbb{R}^N}t^\varrho\mathbf{w}(x)\phi dxdt&\lesssim \int\limits_0^T\int\limits_{\mathbb{R}^N}{|\phi_t|^{\frac{p}{p-1}}}|x|^{\frac{\sigma_1 p-\sigma_2}{p-1}}{|\phi|^{-\frac{1}{p-1}}}dxdt \\&+\int\limits_0^T\int\limits_{\mathbb{R}^N}|x|^{-\frac{\sigma_2}{p-1}}{|\Delta_x \phi|^{\frac{p}{p-1}}}{|\phi|^{-\frac{1}{p-1}}}dxdt.
\end{split}
\end{equation}}
Straightforward computations yield the estimates
\begin{align*}
\left|\frac{d}{dt}\left(\psi^{\frac{p}{p-1}}\left(\frac{t}{T}\right)\right)\right|\leq CT^{-1}\psi^{\frac{1}{p-1}}\left(\frac{t}{T}\right)\left|\psi'\left(\frac{t}{T}\right)\right|,\end{align*} and \begin{align*}\left|\Delta\left(\varphi^{\frac{2p}{p-1}}\left(\frac{|x|}{R}\right)\right)\right|\leq CR^{-2}\phi^{\frac{2}{p-1}}\left(\frac{|x|}{R}\right).
\end{align*}
Using the properties of the cutoff functions $\Phi$ and $\Psi,$ together with the change of variables $t=T\tau$ and $x=Ry$ we have
\begin{equation}
\label{1*}\int_0^T\int_{\mathbb{R}^N}{|\phi_t|^{\frac{p}{p-1}}}|x|^{\frac{\sigma_1 p-\sigma_2}{p-1}}{|\phi|^{-\frac{1}{p-1}}}dxdt\leq
CT^{1-\frac{p}{p-1}}R^{\frac{\sigma_1p-\sigma_2}{p-1}+N},
\end{equation}
and
\begin{equation}\label{2*}
\int\limits_0^T\int\limits_{\mathbb{R}^N}|x|^{-\frac{\sigma_2}{p-1}}{|\Delta_x \phi|^{\frac{p}{p-1}}}{|\phi|^{-\frac{1}{p-1}}}dxdt\leq
CTR^{N-\frac{2p+\sigma_2}{p-1}}.\end{equation}
Hence, one can deduce that
\begin{equation}\label{3*}
\begin{split}
\int\limits_0^{T}t^{\varrho}\Psi^{\frac{p}{p-1}}\left(\frac{t}{T}\right)dt\int\limits_{\mathbb{R}^N}\mathbf{w}(x)\Phi^{\frac{2p}{p-1}}\left(\frac{|x|}{R}\right) dx= CT^{\varrho+1}\int\limits_{|x|<R}\mathbf{w}(x)\Phi^{\frac{2p}{p-1}}\left(\frac{|x|}{R}\right) dx.\end{split}
\end{equation} 
{Owing to \eqref{1*}-\eqref{3*}, we can rewrite the inequality \eqref{4} as
\begin{equation}
    \label{Final-ineq}
\int\limits_{\R^N}\mathbf{w}(x)\varphi^{\frac{2p}{p-1}}\left(\frac{|x|}{R}\right) dx\lesssim T^{-\varrho}R^{N-\frac{2p+\sigma_2}{p-1}}+T^{-\varrho-1-\frac{1}{p-1}}R^{\frac{\sigma_1p-\sigma_2}{p-1}+N}.
\end{equation}}
Taking $T=R^{\sigma_1+2}$, we can reformulate \eqref{Final-ineq} as
\begin{equation*}
    \label{Fin-ineq-R}
\int\limits_{\R^N}\mathbf{w}(x)\varphi^{\frac{2p}{p-1}}\left(\frac{|x|}{R}\right) dx\lesssim R^{N-2\varrho-\varrho\sigma_1-\frac{2p+\sigma_2}{p-1}}.    
\end{equation*}

Note that 
$$
N-2\varrho-\varrho\sigma_1-\frac{2p+\sigma_2}{p-1}<0 \quad \Longleftrightarrow\quad  p<\frac{N+\sigma_2-\varrho(2+\sigma_1)}{N-2-\varrho(2+\sigma_1)}.
$$
Hence, passing to the limit as $R \to +\infty$ in the last inequality, we have that
\begin{equation*}
\begin{split}
\lim\limits_{R \to +\infty}\int\limits_{|x|<R}\mathbf{w}(x)\Phi^{\frac{2p}{p-1}}\left(\frac{|x|}{R}\right) dx=\int\limits_{\mathbb{R}^N}\mathbf{w}(x) dx\leq 0,\end{split}\end{equation*}
which contradicts the fact that $\int\limits_{\mathbb{R}^N}\mathbf{w}(x)dx>0.$

\subsubsection{The case $\varrho=0,\,p\leq \frac{N+\sigma_2}{N-2}$}
$ $\\
$\bullet$ The case \texorpdfstring{$\varrho=0,\,p< \frac{N+\sigma_2}{N-2}$}{gamma=0, p < (N+sigma2)/(N-2-+sigma1)}.
The result follows immediately from the previous case by setting $\varrho=0.$\\
$\bullet$ {The case \texorpdfstring{$\varrho=0,\,p= \frac{N+\sigma_2}{N-2}$}{gamma=0, p = (N+sigma2)/(N-2-+sigma1)}}
Let us introduce a test function
\begin{equation*}\label{3-1}
\phi(t,x)\in C^1((0,\infty); C^2(\mathbb{R}^N)),
\end{equation*} defined by
$$
\phi(t,x)=\psi^{\frac{p}{p-1}}\left(\frac{t}{T}\right)\varphi^{\frac{2p}{p-1}}\left({\frac{\log\left(\frac{|x|}{\sqrt{R}}\right)}{\log(\sqrt{R})}}\right).
$$
The cutoff functions $\psi$ and 
$\phi$ are given by
$$
\psi(s)=\begin{cases}
1 \hspace{0.12cm}\text{if}\hspace{0.12cm} 1/2\leq s\leq 3/4,\\
0 \hspace{0.12cm}\text{if}\hspace{0.12cm} s\in [0,1/4]\cup[4/5,\infty),
\end{cases}\quad\varphi(s)=\begin{cases}
1 \hspace{0.12cm}\text{if}\hspace{0.12cm}  s\in (-\infty,0],\\
0 \hspace{0.12cm}\text{if}\hspace{0.12cm} s\geq 1.
\end{cases}
$$ Here $T>>1$ and $R>>1$ denote sufficiently large parameters.

By the properties of the function $\varphi$ one has (see \cite{BT22})
$$\left|\Delta\varphi^{\frac{2p}{p-1}}\left({\frac{\log\left(\frac{|x|}{\sqrt{R}}\right)}{\log(\sqrt{R})}}\right)\right|\leq C|x|^{-2}(\log R)^{-1}\varphi^{\frac{2}{p-1}},\,\sqrt{R}<|x|\leq R.$$
Consequently,
\begin{equation*}\label{2*-1}
\int\limits_0^T\int\limits_{\mathbb{R}^N}|x|^{-\frac{\sigma_2}{p-1}}{|\Delta_x \phi|^{\frac{p}{p-1}}}{|\phi|^{-\frac{1}{p-1}}}dxdt\leq
CT(\log R)^{\frac{2-N}{2+\sigma_2}}.\end{equation*}
Since $p= \frac{N+\sigma_2}{N-2},$ inequality \eqref{Final-ineq} can be replaced by
\begin{equation}
    \label{Final-ineq-1}
\int\limits_{\mathbb{R}^N}\mathbf{w}(x)\varphi^{\frac{2p}{p-1}}\left(\frac{\log\left(\frac{|x|}{\sqrt{R}}\right)}{\log (\sqrt{R})}\right) dx\lesssim (\log R)^{\frac{2-N}{2+\sigma_2}}+T^{-\frac{N+\sigma_2}{2+\sigma_2}}R^{\frac{\sigma_1N+\sigma_1\sigma_2+2\sigma_2+2N}{2+\sigma_2}}.
\end{equation}
By choosing $T=R^{m}$ with $m>\sigma_1+2$, inequality \eqref{Final-ineq-1} can be rewritten as
\begin{equation*}
    \label{Fin-ineq-R-1}
\int\limits_{\mathbb{R}^N}\mathbf{w}(x)\varphi^{\frac{2p}{p-1}}\left(\frac{\log\left(\frac{|x|}{\sqrt{R}}\right)}{\log (\sqrt{R})}\right) dx\lesssim (\log R)^{\frac{2-N}{2+\sigma_2}}+R^{-\frac{(N+\sigma_2)(m-\sigma_1-2)}{2+\sigma_2}}.    
\end{equation*}
Letting $R \to +\infty$ in the above estimate yields
\begin{equation*}
\begin{split}
\lim\limits_{R \to +\infty}\int\limits_{|x|<R}\mathbf{w}(x)\varphi^{\frac{2p}{p-1}}\left(\frac{\log\left(\frac{|x|}{\sqrt{R}}\right)}{\log (\sqrt{R})}\right) dx=\int\limits_{\mathbb{R}^N}\mathbf{w}(x) dx\leq 0.\end{split}\end{equation*}
This conclusion contradicts the assumption that $\int\limits_{\mathbb{R}^N}\mathbf{w}(x)dx>0.$

\subsubsection{The case $\varrho>0,\,p>1$}
Applying the estimates \eqref{1*}, \eqref{2*}, and \eqref{3*}, inequality \eqref{4} can be expressed as
\begin{equation*}
\begin{split}
\int\limits_{|x|<R}\mathbf{w}(x)\Phi^{\frac{2p}{p-1}}\left(\frac{|x|}{R}\right) dx\leq 
CT^{-\varrho-\frac{p}{p-1}}R^{\frac{\sigma_1p-\sigma_2}{p-1}+N}+CT^{-\varrho}R^{-\frac{2p+\sigma_2}{p-1}+N}.\end{split}\end{equation*}
Choosing $T=R^{m},$ the above estimate becomes
\begin{equation}\label{W1}
\begin{split}
\int\limits_{|x|<R}\mathbf{w}(x)\Phi^{\frac{2p}{p-1}}\left(\frac{|x|}{R}\right) dx\leq 
CR^{-\varrho m-\frac{pm}{p-1}+\frac{\sigma_1p-\sigma_2}{p-1}+N}+CR^{-\varrho m-\frac{2p+\sigma_2}{p-1}+N}.\end{split}\end{equation}
Now, taking $m=N\varrho^{-1},$ we observe that
$${-\varrho m-\frac{pm}{p-1}+\frac{\sigma_1p-\sigma_2}{p-1}+N}<0\quad\mbox{and}\quad {-\varrho m-\frac{2p+\sigma_2}{p-1}+N}<0.$$
Therefore both powers of $R$ on the right-hand side of \eqref{W1} are negative, and letting $R \to +\infty$ in \eqref{W1} gives
\begin{equation*}
\begin{split}
\lim\limits_{R \to +\infty}\int_{|x|<R}\mathbf{w}(x)\Phi^{\frac{2p}{p-1}}\left(\frac{|x|}{R}\right) dx=\int_{\mathbb{R}^N}\mathbf{w}(x) dx\leq 0,\end{split}\end{equation*}
which contradicts the assumption that $\displaystyle\int_{\mathbb{R}^N}\mathbf{w}(x)dx>0.$ This contradiction completes the proof.

\section{Global existence}
\label{sec:global}

To carry out the proof of Theorem~\ref{thm:global-main}, we will make use of the following technical lemma.

\begin{lem}\label{lem:choice-param}
Let \(N \ge 2\) and assume that the parameters satisfy
\begin{equation*}\label{param-assum}
    -2 < \sigma_2 < \sigma_1 \le 0, 
    \qquad -1 < \varrho < 0, 
    \qquad p > p^*,
\end{equation*}
where \(p^*\) is given in~\eqref{pcritt}. Then there exists \(r \in (1,\infty)\) such that
\begin{equation}\label{choice-r}
    \max\Bigl\{
        \frac{1}{p_c} - \frac{2+\sigma_1}{Np},\;
        \frac{1}{p_c} + \frac{\varrho(2+\sigma_1)}{N}
    \Bigr\}
    \;<\; \frac{1}{r} \;<\;
    \min\Bigl\{
        \frac{1}{p_c},\;
        \frac{N+\sigma_2}{Np}
    \Bigr\}.
\end{equation}

Define
\begin{equation}\label{def:mu}
    \mu := \frac{N}{2+\sigma_1}\Bigl(\frac{1}{p_c} - \frac{1}{r}\Bigr),
\end{equation}
\begin{equation}\label{def:beta}
    \beta := \frac{N}{2+\sigma_1}\Bigl(\frac{1}{r_c} - \frac{1}{r}\Bigr),
\end{equation}
and
\begin{equation}\label{delta-def}
    \delta
    := \frac{N(p-1)}{(2+\sigma_1)r}
       - \frac{\sigma_2 - \sigma_1}{2+\sigma_1},
\end{equation}
where $p_c, r_c$ are given by \eqref{pcritt}.
Then the quantities \(\mu\), \(\beta\), and \(\delta\) satisfy
\begin{equation}\label{mu-proper}
    0 < \mu < \frac{1}{p},
    \qquad
    0 < \beta < 1,
    \qquad
    0 < \delta < 1,
\end{equation}
and moreover,
\begin{equation}\label{mu-proper1}
    1 - p\mu - \delta
    = -\mu
    = \varrho + 1 - \beta.
\end{equation}
\end{lem}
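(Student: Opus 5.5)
The plan is to reduce everything to explicit algebra in the quantity $1/r$. First I will show that the open interval in \eqref{choice-r} is nonempty and lies in $(0,1)$. Then I will read off each property in \eqref{mu-proper} from one of the bounds in \eqref{choice-r}. Throughout write $A=2+\sigma_1$, and use $\frac{1}{p_c}=\frac{2+\sigma_2}{N(p-1)}$ and
\[
\frac{1}{r_c}=\frac{1}{p_c}+\frac{(1+\varrho)A}{N}.
\]

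\textbf{Nonemptiness of the interval.} I compare each lower bound with each upper bound.
\begin{itemize}
\item $\frac{1}{p_c}-\frac{A}{Np}<\frac1{p_c}$ holds trivially.
\item $\frac1{p_c}+\frac{\varrho A}{N}<\frac1{p_c}$ holds since $\varrho<0$.
\item For $\frac1{p_c}-\frac{A}{Np}<\frac{N+\sigma_2}{Np}$, multiply by $Np$. Using $\frac{(2+\sigma_2)p}{p-1}=2+\sigma_2+\frac{2+\sigma_2}{p-1}$, this becomes $\frac{2+\sigma_2}{p-1}<N+\sigma_1$. Since $p>p^*$ gives $p-1>\frac{2+\sigma_2}{N-2-\varrho A}$, we get $\frac{2+\sigma_2}{p-1}<N-2-\varrho A<N-2+A=N+\sigma_1$, because $-\varrho<1$.
\item For $\frac1{p_c}+\frac{\varrho A}{N}<\frac{N+\sigma_2}{Np}$, multiply by $Np(p-1)$. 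This becomes $(2+\sigma_2)p+\varrho A p(p-1)<(N+\sigma_2)(p-1)$, which is exactly $f(p)<0$ for the quadratic $f$ of Lemma~\ref{lem:f-p}. That lemma applies since $p>p^*$.
\end{itemize}
I expect this last comparison to be the only non-obvious step, and Lemma~\ref{lem:f-p} was prepared precisely for it. Since the upper bound is at most $\frac{N+\sigma_2}{Np}<1$ and is positive, I can pick $1/r$ in the interval and also in $(0,1)$, so that $r\in(1,\infty)$.

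\textbf{The bounds on $\mu$ and $\delta$.} The bound $\mu>0$ follows from $1/r<1/p_c$. The bound $\mu<1/p$ is equivalent to $\frac1{p_c}-\frac1r<\frac{A}{Np}$, which is the first lower bound in \eqref{choice-r}. For $\delta$, I compute
\[
(p-1)\mu=\frac{2+\sigma_2}{A}-\frac{N(p-1)}{Ar},
\]
which gives $1-(p-1)\mu=\frac{\sigma_1-\sigma_2}{A}+\frac{N(p-1)}{Ar}=\delta$. So $\delta=1-(p-1)\mu$, which is equivalent to the first identity $1-p\mu-\delta=-\mu$. Consequently $\delta<1$ because $\mu>0$, and $\delta>0$ because $(p-1)\mu<(p-1)/p<1$.

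\textbf{The bounds on $\beta$.} From the formula for $1/r_c$ above, $\beta=\mu+(1+\varrho)$, which is the second identity in \eqref{mu-proper1}. Then $\beta>0$ since $\mu>0$ and $\varrho>-1$. Finally, $\beta<1$ is equivalent to $\mu<-\varrho$, that is, to $\frac1{p_c}-\frac1r<-\frac{\varrho A}{N}$, which is the second lower bound in \eqref{choice-r}.
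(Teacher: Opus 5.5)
Your proof is correct and follows the paper's argument essentially step by step: you check nonemptiness of the interval by the two trivial comparisons, reduce the key comparison to $f(p)<0$ from Lemma~\ref{lem:f-p}, and then use $\delta=1-(p-1)\mu$ and $\beta=\mu+1+\varrho$ to read off the bounds and both identities. You are slightly more explicit than the paper in two places: you justify $p^*>\frac{N+2+\sigma_1+\sigma_2}{N+\sigma_1}$ through $-\varrho A<A$, and you ensure $1/r\in(0,1)$ so that $r\in(1,\infty)$.
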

The proof of Lemma~\ref{lem:choice-param} will be given later. For the moment, we simply record the required parameter choices and resume the main argument for Theorem~\ref{thm:global-main}.
\begin{proof}[Proof of Theorem~\ref{thm:global-main}]
\quad

For the parameters \(r\) and \(\mu\) specified in \eqref{choice-r} and \eqref{def:mu}, we introduce the Banach space
\[
\bx:=\Bigl\{ u\in L^\infty\!\bigl((0,\infty);L^{r}(\mathbb R^N)\bigr)\;:\;
\|u\|_{\bx}:=\sup_{t>0} t^\mu\|u(t)\|_{L^{r}}<\infty \Bigr\}.
\]

We seek a mild solution of \eqref{mild} inside the closed ball  
\(\overline{\bb}(0,2C\varepsilon)\subset X\), where \(\varepsilon>0\) will be chosen sufficiently small.

Define the operator \(G:\bx\to\bx\) by
\begin{equation*}\label{def:G}
    G(u)(t):=\bs(t)u_0 + F(u)(t) + H(t),
\end{equation*}
where
\begin{equation*}\label{def:F-H}
\begin{aligned}
    F(u)(t)
        &:= \int\limits_0^t \bs(t-s)\bigl(|x|^{\sigma_2-\sigma_1}|u(s)|^{p}\bigr)\,ds,\\[2mm]
    H(t)
        &:= \int\limits_0^t \bs(t-s)\bigl(s^{\varrho}|x|^{-\sigma_1}\bw\bigr)\,ds .
\end{aligned}
\end{equation*}

Since \(p>p^*\) and \eqref{choice-r} imply
\[
\frac1r < \frac1{p_c} < 1+\frac{\sigma_1}{N},
\]
we may apply the unweighted estimate \eqref{q-2-q-1} with \(q_1=p_c\), \(q_2=r\), and use \eqref{def:mu} to obtain
\[
\|\bs(t)u_0\|_{L^{r}}
    \le C\,t^{-\frac{N}{2+\sigma_1}\left(\frac1{p_c}-\frac1r\right)}
          \|u_0\|_{L^{p_c}}
    = C\,t^{-\mu}\|u_0\|_{L^{p_c}}.
\]
Thus,
\begin{equation}\label{linear-bound}
    \sup_{t>0} t^\mu \|\bs(t)u_0\|_{L^{r}}
        \le C\,\|u_0\|_{L^{p_c}}.
\end{equation}

For the nonlinear term \(F(u)\), we apply the weighted estimate \eqref{q-2-q-1} with
\(q_1=r/p\), \(q_2=r\), and \(\gamma=\sigma_1-\sigma_2>0\), yielding
\begin{equation*}\label{nl-bound1}
    \|F(u)(t)\|_{L^r}
    \le C\int\limits_0^t (t-s)^{-\delta}\|u(s)\|_{L^r}^p\,ds,
\end{equation*}
where \(\delta\) is given by \eqref{delta-def}.

Multiplying by \(t^\mu\), using the definition of \(\|u\|_{\bx}\) and \eqref{mu-proper}, we find
\begin{equation*}\label{nl-bound2}
\begin{aligned}
t^\mu\|F(u)(t)\|_{L^r}
&\le C\|u\|_{\bx}^p\,t^\mu\!\int\limits_0^t (t-s)^{-\delta}s^{-p\mu}\,ds\\
&\le C\|u\|_{\bx}^p\,t^{\mu+1-p\mu-\delta}\!\int\limits_0^1 (1-\tau)^{-\delta}\tau^{-p\mu}\,d\tau,
\end{aligned}
\end{equation*}
and the last integral is finite by \eqref{mu-proper}. Hence
\begin{equation}\label{nonlinear-bound}
    \sup_{t>0} t^\mu\|F(u)(t)\|_{L^r}
    \le C\|u\|_{\bx}^p.
\end{equation}

A standard Lipschitz estimate based on  
\(|\,|a|^p-|b|^p|\le C(|a|^{p-1}+|b|^{p-1})|a-b|\) gives
\begin{equation}\label{nonlinear-lip}
\|F(u)-F(v)\|_{\bx}
\le C\big(\|u\|_{\bx}^{p-1}+\|v\|_{\bx}^{p-1}\big)\|u-v\|_{\bx}.
\end{equation}

Next, applying the unweighted estimate \eqref{q-2-q-1} with \(q_1=r\), \(q_2=r_c\), we obtain
\begin{equation*}\label{forc-bound1}
\begin{aligned}
\|H(t)\|_{L^r}
&\le C\left\||\cdot|^{-\sigma_1}\bw\right\|_{L^{r_c}}
   \int\limits_0^t (t-s)^{-\beta}s^\varrho\,ds \\
&\le C\left\||\cdot|^{-\sigma_1}\bw\right\|_{L^{r_c}}
   t^{\varrho+1-\beta}\!\int\limits_0^1 (1-\tau)^{-\beta}\tau^\varrho\,d\tau,
\end{aligned}
\end{equation*}
where \(\beta\) is defined in \eqref{def:beta}.  
Using \eqref{mu-proper}–\eqref{mu-proper1} and \(\varrho>-1\), the last integral is finite, and thus
\begin{equation}\label{forcing-bound}
\sup_{t>0} t^\mu\|H(t)\|_{L^r}
\le C\left\||\cdot|^{-\sigma_1}\bw\right\|_{L^{r_c}}.
\end{equation}

Combining \eqref{linear-bound}, \eqref{nonlinear-bound}, and \eqref{forcing-bound}, we obtain for every
\(u\in\overline{\bb}(0,2C\varepsilon)\),
\begin{equation*}\label{stab-bound}
\begin{aligned}
\|G(u)\|_{\bx}
&\le C\bigl(\|u_0\|_{L^{p_c}}+\left\||\cdot|^{-\sigma_1}\bw\right\|_{L^{r_c}}\bigr)
   + C\|u\|_{\bx}^p\\
&\le C\varepsilon + C(2C\varepsilon)^p\\
&\le 2C\varepsilon,
\end{aligned}
\end{equation*}
provided \(\varepsilon>0\) is chosen sufficiently small.  
Thus \(G\) maps \(\overline{\bb}(0,2C\varepsilon)\) into itself whenever  
\(\|u_0\|_{L^{p_c}}+\left\||\cdot|^{-\sigma_1}\bw\right\|_{L^{r_c}} \le \varepsilon\).

Finally, by \eqref{nonlinear-lip}, for all \(u,v\in \overline{\bb}(0,2C\varepsilon)\),
\begin{equation*}\label{contrac-bound}
\|G(u)-G(v)\|_{\bx}
\le 2C(2C\varepsilon)^{p-1}\|u-v\|_{\bx}.
\end{equation*}
Choosing \(\varepsilon>0\) small enough so that  
\(2C(2C\varepsilon)^{p-1}<1\), the operator \(G\) becomes a contraction on  
\(\overline{\bb}(0,2C\varepsilon)\).  
By the Banach fixed-point theorem, \(G\) admits a unique fixed point  
\(u\in\overline{\bb}(0,2C\varepsilon)\), which is the desired global mild solution of \eqref{mild} in \(\bx\).

This completes the proof of Theorem~\ref{thm:global-main}.
\end{proof}

Next, we prove Lemma~\ref{lem:choice-param}.
\begin{proof}[Proof of Lemma~\ref{lem:choice-param}]
Set \(A:=2+\sigma_{1}>0\).  
The assumption \(p>p^{*}\) immediately yields
\begin{equation*}\label{first-observ}
    \frac1{p_{c}}
    \;<\;
    \frac{N-2-\varrho A}{N}
    \;<\;
    1+\frac{\sigma_{1}}{N}
    \;\le\; 1.
\end{equation*}
The proof is divided into three steps.

\medskip
\noindent\textbf{1. Existence of \(r\).}
We need to verify that
\begin{equation}\label{exist-r}
    \max\Bigl\{
        \tfrac1{p_{c}}-\tfrac{A}{Np},\;
        \tfrac1{p_{c}}+\tfrac{\varrho A}{N}
    \Bigr\}
    \;<\;
    \min\Bigl\{
        \tfrac1{p_{c}},\;
        \tfrac{N+\sigma_{2}}{Np}
    \Bigr\}.
\end{equation}
Since \(\varrho<0\), both
\[
\frac1{p_{c}}+\frac{\varrho A}{N}
    <\frac1{p_{c}},
\qquad
\frac1{p_{c}}-\frac{A}{Np}
    <\frac1{p_{c}},
\]
hold automatically. Hence the left-hand side of \eqref{exist-r} is strictly smaller than \(1/p_{c}\).

From Lemma~\ref{lem:f-p}, we know that for all \(p\ge p^{*}\),
\[
f(p)
=\varrho A p^{2}-(N-2+\varrho A)p+(N+\sigma_{2})<0.
\]
The inequality
\[
\frac1{p_{c}}+\frac{\varrho A}{N}
    < \frac{N+\sigma_{2}}{Np}
\]
is equivalent to \(f(p)<0\); hence it holds whenever \(p>p^{*}\).

Next, the inequality
\[
\frac1{p_{c}}-\frac{A}{Np}
    <\frac{N+\sigma_{2}}{Np}
\]
is equivalent to
\[
p>\frac{N+2+\sigma_{1}+\sigma_{2}}{N+\sigma_{1}}.
\]
A direct computation shows that
\[
p^{*}
\;>\;
\frac{N+2+\sigma_{1}+\sigma_{2}}{N+\sigma_{1}},
\]
so this inequality is also satisfied for all \(p>p^{*}\).

\medskip
\noindent\textbf{2. Bounds on \(\mu,\beta,\delta\).}
Since \(1/r<1/p_{c}\), we have \(\mu>0\).  
The inequality
\[
\frac1r
> \frac1{p_{c}}-\frac{A}{Np}
\]
gives
\[
\mu
=\frac{N}{A}\Bigl(\frac1{p_{c}}-\frac1r\Bigr)
<\frac1p.
\]

For
\(
\beta=\frac{N}{A}\!\left(\frac1{r_{c}}-\frac1r\right),
\)
we use the identity
\begin{equation*}\label{ident-rc-pc}
    \frac1{r_{c}}
    =\frac1{p_{c}}+\frac{(1+\varrho)A}{N}.
\end{equation*}
Since \(\varrho>-1\), we have \(1/r_{c}>1/p_{c}>1/r\), hence \(\beta>0\).  

Moreover,
\[
\frac1r
> \frac1{p_{c}}+\frac{\varrho A}{N}
= \frac1{r_{c}}-\frac{A}{N},
\]
which implies \(\beta<1\).  
Thus \(0<\beta<1\).

To control \(\delta\), use the identity
\[
\frac1r = \frac1{p_{c}}-\frac{A}{N}\mu.
\]
Substituting into
\[
\delta
=\frac{N(p-1)}{Ar}
 +\frac{\sigma_{1}-\sigma_{2}}{A}
\]
gives
\begin{align*}
\delta
&=\frac{N(p-1)}{A}
    \Bigl(\frac1{p_{c}}-\frac{A}{N}\mu\Bigr)
    +\frac{\sigma_{1}-\sigma_{2}}{A} \\
&=\frac{2+\sigma_{2}}{A}
  +\frac{\sigma_{1}-\sigma_{2}}{A}
  -(p-1)\mu \\
&=1-(p-1)\mu.
\end{align*}
Since \(0<\mu<1/p\), we have \((p-1)\mu<1-1/p\), hence
\[
0<\delta<1.
\]

\medskip
\noindent\textbf{3. Algebraic identities.}
Using \(\delta=1-(p-1)\mu\), we obtain
\begin{align*}
1-p\mu-\delta
&=1-p\mu-\bigl(1-(p-1)\mu\bigr)
\\&=-\mu.\end{align*}

For the second identity,
\begin{align*}
\mu-\beta
&= \frac{N}{A}
  \Bigl(\frac1{p_{c}}-\frac1{r_{c}}\Bigr)\\&
= \frac{N}{A}
  \Bigl(-\frac{(1+\varrho)A}{N}\Bigr)\\&
= -(1+\varrho),
\end{align*}
hence
\[
-\mu=\varrho+1-\beta.
\]

This completes the proof of Lemma~\ref{lem:choice-param}.
\end{proof}
\section{Local existence}
\label{sec:local}
This section is devoted to the proof of Theorem~\ref{thm:loc}. 
The argument is based on a fixed-point method in the Banach space
\[
X_T := \Bigl\{ u \in C\bigl([0,T]; L^q(\mathbb{R}^N)\bigr) : \|u\|_{X_T} < \infty \Bigr\},
\]
endowed with the norm
\[
\|u\|_{X_T} := \sup_{0 \le t \le T} \|u(t)\|_{L^q(\mathbb{R}^N)}.
\]

We introduce the operator
\[
F(u)(t) = \mathbf{S}(t)u_0 
+ \int_0^t \mathbf{S}(t-s)\bigl(|x|^{\sigma_2-\sigma_1}\,|u(s)|^p\bigr)\,ds
+ \int_0^t \mathbf{S}(t-s)\bigl(s^{\varrho}\,|x|^{-\sigma_1}\bw\bigr)\,ds.
\]
Our goal is to show that, for a suitable choice of $T>0$ and for an appropriate radius $M>0$, the map $F$ is a contraction on the closed ball
\[
B_M := \bigl\{ u \in X_T : \|u\|_{X_T} \le M \bigr\}.
\]

Since $\mathbf{S}(t)$ is a $C_0$-semigroup on $L^q(\mathbb{R}^N)$, there exists a constant $C_0>0$ such that
\[
\|\mathbf{S}(t)u_0\|_{L^q} \le C_0 \|u_0\|_{L^q}, 
\qquad t \ge 0.
\]

For the nonlinear term, we apply Proposition~1 with
\[
\gamma = \sigma_1-\sigma_2 >0, 
\qquad q_2 = q, 
\qquad q_1 = \frac{q}{p}.
\]
Under this choice, condition~\eqref{cond} reads
\[
0 < \frac{1}{q} < \frac{\sigma_1-\sigma_2}{N} + \frac{p}{q} < 1 + \frac{\sigma_1}{N}.
\]
The right-hand inequality implies $q > \frac{Np}{N+\sigma_2}$, which is ensured by our assumptions. 
The corresponding time-decay exponent given by~\eqref{q-2-q-1} is
\begin{equation}
    \label{alpha-loc}
    \alpha 
:= \frac{N}{2+\sigma_1}\Bigl(\frac{p}{q}-\frac{1}{q}\Bigr)
   + \frac{\sigma_1-\sigma_2}{2+\sigma_1}
 = \frac{N(p-1)}{q(2+\sigma_1)} + \frac{\sigma_1-\sigma_2}{2+\sigma_1}.
\end{equation}
The integrability condition $\alpha<1$ is equivalent to
\[
\frac{N(p-1)}{q} < 2+\sigma_2,
\]
that is, $q > \frac{N(p-1)}{2+\sigma_2}$, which again follows from the choice of $q$.

Concerning the forcing term, we set
\[
f := |x|^{-\sigma_1}\bw \in L^q(\mathbb{R}^N),
\]
so that the forcing takes the form $s^{\varrho}f(x)$. 
Applying Proposition~\ref{prop:weighted-estimate} with $\gamma=0$ and $q_1=q_2=q$ requires
\[
0 < \frac{1}{q} < 1 + \frac{\sigma_1}{N},
\qquad\text{equivalently}\qquad
 \frac{N}{N+\sigma_1}<q<\infty.
\]
This condition is automatically satisfied since $p>1$ and $\sigma_2<\sigma_1$ imply
\[
\frac{Np}{N+\sigma_2} > \frac{N}{N+\sigma_1}.
\]
Hence,
\[
\bigl\| \mathbf{S}(t-s)(s^{\varrho}f) \bigr\|_{L^q}
\le C\, s^{\varrho} \|f\|_{L^q}.
\]
In what follows, the constant $C$ denotes a generic positive constant that may vary from line to line.

Let $u\in X_T$. 
The linear term satisfies
\[
\|\mathbf{S}(t)u_0\|_{L^q} \le C_0 \|u_0\|_{L^q}.
\]
For the nonlinear contribution, Proposition~\ref{prop:weighted-estimate} yields
\[
\bigl\|\mathbf{S}(t-s)\bigl(|x|^{\sigma_2-\sigma_1}|u(s)|^p\bigr)\bigr\|_{L^q}
\le C (t-s)^{-\alpha}\|u(s)\|_{L^q}^p,
\]
and therefore
\[
\Bigl\|\int_0^t \mathbf{S}(t-s)\bigl(|x|^{\sigma_2-\sigma_1}|u(s)|^p\bigr)\,ds\Bigr\|_{L^q}
\le \frac{C}{1-\alpha}\, t^{1-\alpha}\|u\|_{X_T}^p.
\]
For the forcing term we obtain
\[
\Bigl\|\int_0^t \mathbf{S}(t-s)\bigl(s^{\varrho}f\bigr)\,ds\Bigr\|_{L^q}
\le \frac{C}{\varrho+1}\, t^{\varrho+1}\|f\|_{L^q}.
\]

We now fix
\[
M := 2C_0\|u_0\|_{L^q},
\qquad
R(T) := C_1 T^{1-\alpha} M^p + C_2 \|f\|_{L^q} T^{\varrho+1},
\]
with $C_1 = \frac{C}{1-\alpha}$ and $C_2 = \frac{C}{\varrho+1}$. 
Choosing $T>0$ sufficiently small so that $R(T)\le \frac{M}{2}$, which is possible since $1-\alpha>0$ and $\varrho>-1$, we obtain for all $u\in B_M$ and $t\in[0,T]$,
\[
\|F(u)(t)\|_{L^q}
\le C_0\|u_0\|_{L^q} + R(T)
\le M.
\]
Hence, $F(B_M)\subset B_M$.

To prove the contraction property, let $u,v\in B_M$. 
Using the inequality
\[
\bigl||u|^p-|v|^p\bigr|
\le p\bigl(|u|^{p-1}+|v|^{p-1}\bigr)|u-v|
\]
together with H\"older's inequality, we deduce
\[
\|\,|u|^p-|v|^p\|_{L^{q/p}}
\le 2p M^{p-1}\|u-v\|_{L^q}.
\]
Applying Proposition~\ref{prop:weighted-estimate} once again yields
\[
\bigl\|\mathbf{S}(t-s)\bigl(|x|^{\sigma_2-\sigma_1}(|u|^p-|v|^p)\bigr)\bigr\|_{L^q}
\le 2pC M^{p-1}(t-s)^{-\alpha}\|u-v\|_{L^q}.
\]
Consequently,
\[
\|F(u)(t)-F(v)(t)\|_{L^q}
\le \frac{2pC}{1-\alpha}\, M^{p-1} t^{1-\alpha}\|u-v\|_{X_T},
\]
and hence
\[
\|F(u)-F(v)\|_{X_T}
\le C_3 M^{p-1} T^{1-\alpha}\|u-v\|_{X_T},
\qquad
C_3 := \frac{2pC}{1-\alpha}.
\]
By possibly reducing $T$ further so that $C_3 M^{p-1} T^{1-\alpha}<1$, the map $F$ becomes a contraction on $B_M$.

The Banach fixed point theorem then guarantees the existence of a unique fixed point of $F$ in $B_M$, which is exactly a mild solution $u \in C\bigl([0,T]; L^q(\mathbb{R}^N)\bigr)$
of the integral equation~\eqref{eq:main}.

\section{Concluding remarks}
\label{sec:conc}

This work is devoted to the analysis of the critical behavior of a degenerate parabolic equation with a space--time-dependent forcing term, namely equation~\eqref{main}. We establish the existence of a sharp threshold that separates finite-time blow-up from global-in-time existence, characterized by the critical exponent $p^{*}$ defined in~\eqref{crit}.

The analysis combines scaling arguments, semigroup estimates associated with the degenerate operator $|x|^{-\sigma_1}\Delta$, and a fixed-point method in suitably time-weighted Lebesgue spaces. The resulting critical exponent $p^{*}$ extends the classical Fujita exponent and reflects the subtle interplay between the degeneracy parameters $\sigma_1$ and $\sigma_2$, the temporal growth rate $\varrho$, and the spatial dimension $N$.

Several natural questions remain open. In particular, the behavior at the critical threshold $p=p^{*}$ is not covered by the present results and is expected to require more refined analytical tools. The stationary forcing case $\varrho=0$, as well as the possibility of global existence for large initial data when $p>p^{*}$, also deserve further investigation. Moreover, allowing sign-changing or non-radial forcing terms may significantly alter the dynamics and lead to new critical phenomena. Extending the analysis to more general degenerate or singular weights beyond the range $\sigma_1,\sigma_2>-2$ is another challenge.

Overall, this study contributes to the qualitative theory of degenerate parabolic equations with time-dependent forcing. The explicit form of the critical exponent $p^{*}$ highlights how temporal growth, spatial degeneracy, and weighted nonlinearities interact. The techniques developed here, particularly the use of weighted semigroup estimates combined with fixed-point arguments, are flexible and may be adapted to other nonlinear evolution equations with variable coefficients and external sources.\\

    


\noindent$\rule[0.05cm]{16.2cm}{0.05cm}$

\noindent{\bf\large Declarations.}
On behalf of all authors, the corresponding author states that there is no conflict of interest. 
No data-sets were generated or analyzed during the current study.\\

\noindent{\bf\large Funding.}
 BT is supported by the Science Committee of the Ministry of Education and Science of the Republic of Kazakhstan (Grant No. AP23483960).

\noindent$\rule[0.05cm]{16.2cm}{0.05cm}$


\end{document}